\newtheorem{theorem}{Theorem}
\newtheorem{proposition}[theorem]{Proposition}
\newtheorem{lemma}{Lemma}
\newtheorem{remark}{Remark}
\newtheorem{example}{Example}
\renewcommand{\epsilon}{\varepsilon}
\DeclareMathOperator{\Ima}{Im}
\DeclareMathOperator{\Ker}{Ker}
\def\Id{\text{\rm Id}}
\def\cA{\EuScript{A}}
\def\cB{\EuScript{B}}
\def\N{\mathbb{N}}
\def\Z{\mathbb{Z}}
\def\R{\mathbb{R}}
\begin{document}

\title{Admissibility and  nonuniform polynomial dichotomies}
\begin{abstract}
For a general one-sided nonautonomous dynamics defined by a sequence of linear
operators, we consider the notion of a polynomial dichotomy with respect to a sequence of norms
and we characterize it completely in terms of the admissibility of bounded solutions. As a nontrivial
application, we establish the robustness of the notion of a nonuniform polynomial dichotomy. 
\end{abstract}
\begin{thanks}
{ D.D. was supported by the  Croatian Science Foundation under the project IP-2014-09-2285 and by the University of Rijeka under the project uniri-prirod-18-9}
\end{thanks}
\keywords{polynomial dichotomies, admissibility, robustness}

\subjclass[2010]{Primary: 34D09, 37D25.}
\author{Davor Dragi\v cevi\'c}
\address{Department of Mathematics, University of Rijeka, Croatia}
\email{ddragicevic@math.uniri.hr}
\maketitle

\section{Introduction}
The problem of characterizing hyperbolic behaviour of dynamical systems  in terms of the spectral properties of certain linear operators has a long history that goes back to the pioneering works of Perron~\cite{Pe}
and Li~\cite{tali}. More precisely, Perron~\cite{Pe} established a complete characterization of the exponential stability of a linear differential equation \[x'=A(t)x \] in $\R^n$ in terms of the solvability (in $x$) of the nonlinear equation
\begin{equation}\label{adm}
x'=A(t)x+f(t), 
\end{equation}
where $f$ and $x$ belong to suitable function spaces. Similar results for the discrete time dynamics were obtained by Li~\cite{tali}. The condition that~\eqref{adm} has a (unique) solution $x$ in some space $Y_1$ for any choice of $f$ that belongs to some (possibly different) space $Y_2$  is commonly referred to as \emph{admissibility} condition. We note that  this requirement can be formulated in terms of spectral properties of the  linear 
operator 
\[
(Lx)(t)=x'-A(t)x
\]
acting between suitable function spaces. 

A  fundamental  contribution to this line of the research is due to Massera and Sch\"affer~\cite{MS1, MS} (see also Coppel~\cite{Co}). Indeed, in a constrast to the work of Perron, they have established complete characterization (in terms of admissibility) of the notion of a  (uniform) exponential dichotomy which includes the notion of exponential stability as a very particular case. 
More precisely, rather than considering only the dynamics that exhibits stable behaviour, they have considered the case of dynamics with the property that the phase space splits into two complementary directions, where in one direction dynamics  exhibits stable behaviour while in the complementary direction  it possesses an  unstable (chaotic) behaviour. In addition, they have developed an axiomatic approach 
to the problem of constructing all possible pairs $(Y_1, Y_2)$  of function spaces with the property that the corresponding admissibility condition is equivalent to the existence of exponential dichotomy.   The related results in the context of smooth dynamics have been established by Mather~\cite{Mather} and by  Chicone and Swanson~\cite{CS}. To the best of our knowledge, the  first results that deal  with the case of  infinite-dimensional dynamics are due to  Dalec$'$ki\u{\i} and  Kre\u\i{n}~\cite{DK} in the case of continuous time and by Henry~\cite{He} for noninvertible dynamics with discrete time.  For a detailed overview of those developments, we refer to~\cite{CL}. 

For more recent relevant contributions   that deal with continuous or discrete evolution families, we refer to~\cite{AVM,HH, HVM, LRS,  Pituk, P, PPP, PPP1, SS2, ALS1,SS, SS1,Sx2}. Furthermore, for results devoted to linear cocycles over maps and flows, we refer to~\cite{Leiva,LS,PPC,SS3,SS4} and reference therein. We stress that all the above mentioned works deal with \emph{uniform} exponential behaviour. Related results which deal with various flavours of \emph{nonuniform} exponential behaviour can be found in~\cite{BDV1, BDV,BDV3,LP,MSS1,MP,ZLZ,ZZ, ZLZ2}. Finally, for a detailed survey devoted to this line of the research, we refer to~\cite{BDV2}.

We emphasize that all the works that we mentioned deal with \emph{exponential} dichotomies. While exponential behaviour certainly  has a privilaged role due to its presence in the context of smooth  dynamics, it is not the only possible form of the asymptotic behaviour.  To the best of our knowledge, the first ones to study dichotomies with non-exponential growth rates were Preda and 
Megan~\cite{MP}. 
 Subsequent results are due to Muldowney~\cite{M} and  Naulin and Pinto~\cite{NP, NP1}. A systematic study of nonuniform dichotomies with arbitratry growth rates was initiated by Barreira and Valls~\cite{BV}. An important class of those dichotomies are the so-called \emph{nonuniform polynomial} dichotomies introduced independently (and in a slightly different form)
by Barreira and Valls~\cite{BV1} and by Bento and Silva~\cite{BS1, BS2}. In addition, Barreira and Valls gave sufficient conditions (in terms of nonvanishing of the so-called polynomial Lyapunov exponents) for the existence of nonuniform polynomial dichotomies (see Remark~\ref{y9}).

The main objective of the present paper is to obtain a full characterization of the class of nonuniform polynomial dichotomies for dynamics with discrete time in terms of the appropriate admissibility property. This is achieved by studying the notion of a polynomial dichotomy with respect to a sequence of norms which includes the notion of a nonuniform polynomial dichotomy as a particular case. 
To the best of our knowledge, our results are the first one devoted to the characterization of nonuniform dichotomies with non-exponential growth rates via admissibility property (some much weaker results that deal only with contractions and expansions were obtained in~\cite{BDV4}). Our methods combine classical admissibility techniques (the so-called test sequence  method developed by Perron) together with recent contributions to the admissibility in relation with nonuniform exponential dichotomies~\cite{BDV1, BDV}. Furthermore, we build on the work of Hai~\cite{Hai} who obtained similar results for evolution families and considered \emph{uniform} polynomial contractions and expansions. We then apply our results to give a short proof of the robustness of nonuniform 
polynomial dichotomies. Although the robustness property of nonuniform polynomial dichotomies has been obtained earlier in~\cite{BV2}, we here present a much shorter proof. 

The paper is organized as follows. In Section~\ref{P} we introduce the notion of a polynomial dichotomy with respect to a sequence of norms. Then, in Section~\ref{MR} we obtain a complete characterization of this notion in terms of the appropriate admissibility property. Finally, in Sections~\ref{NPD} and~\ref{R} we apply our results to the study of nonuniform polynomial dichotomies.

\section{Preliminaries}\label{P}
Let $X=(X, \lVert \cdot \rVert)$ be a Banach space and let $B(X)$ denote the space of all bounded linear operators on $X$.  Moreover, let $\lVert \cdot \rVert_m$ for $m\in \N$ be a sequence of norms on $X$ such that  $\lVert \cdot \rVert_m$ is equivalent to $\lVert \cdot \rVert$ for each $m$.
Given a sequence $(A_m)_{m\in \N} \subset B(X)$, we define
\[
\cA(m, n)=\begin{cases}
A_{m-1} \cdots A_n & \text{if $m>n$,} \\
\Id & \text{if $m=n$.}
\end{cases}
\]
We say that $(A_m)_{m\in \N}$ admits a \emph{polynomial dichotomy} with respect to the sequence of norms $\lVert \cdot \rVert_m$
if:
\begin{itemize}
\item there exist projections $P_m$, $m\in \N$ satisfying
\begin{equation}\label{pro}
A_mP_m=P_{m+1}A_m, \quad m\in \N,
\end{equation}
such that each map $A_m \rvert_{\Ker P_m} \colon \Ker P_m \to \Ker P_{m+1}$ is invertible;
\item there exist $\lambda, D>0$ such that for every $x\in X$ and $m, n\in \N$ we have 
\begin{equation}\label{d1}
\lVert \cA(m, n)P_n x\rVert_m \le D(m/n)^{-\lambda} \lVert x\rVert_n \quad \text{for $m\ge n$}
\end{equation}
and 
\begin{equation}\label{d2}
\lVert \cA(m, n)Q_nx \rVert_m \le  D(n/m)^{-\lambda}\lVert x\rVert_n \quad \text{for $m\le n$,}
\end{equation}
where $Q_n=\Id-P_n$ and
\[
\cA(m, n)=(\cA(n, m)\rvert_{\Ker P_m})^{-1} \colon \Ker P_n \to \Ker P_m 
\]
for $m<n$.
\end{itemize}

We also introduce a class of sequence spaces that will play a major role in our arguments. 
Let $Y$ be the set of all sequence $\mathbf x=(x_m)_{m\in \N}\subset X$  such that
\[
\lVert \mathbf x\rVert_\infty:=\sup_{m\in \N}\lVert x_m\rVert_m <\infty.
\]
Clearly, $(Y, \lVert \cdot \rVert_\infty)$ is a Banach space. Furthermore, for a given closed subspace $Z\subset X$, let $Y_Z$ be the set of all $\mathbf x=(x_m)_{m\in \N}\in Y$ such that $x_1\in Z$. It is easy to verify that $Y_Z$ is a closed subspace of $Y$. In the particular case when $Z=\{0\}$, we write $Y_0$ instead of $Y_{\{0\}}$.

Let
\[
\mathcal D(T_Z):=\bigg{\{}\mathbf x=(x_m)_{m\in \N}\in Y_Z:  \sup_{m\in \N} \big{(}(m+1) \lVert x_{m+1}-A_m x_m\rVert_{m+1}\big{)} <\infty \bigg{\}}.
\]
Moreover, we consider the linear operator $T_Z \colon \mathcal D(T_Z)\to Y_0$ defined by 
\[
(T_Z \mathbf x)_1=0 \quad \text{and} \quad (T_Z \mathbf x)_{m+1}=(m+1)(x_{m+1}-A_m x_m) \quad  \text{for $m\in \N$.}
\]

\begin{proposition}\label{prop1}
We have that $T_Z$ is a closed linear operator. 
\end{proposition}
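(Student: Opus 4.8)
The plan is to check closedness straight from the definition. Linearity of $T_Z$, and the fact that $\mathcal D(T_Z)$ is a linear subspace of $Y_Z$, are immediate from the formulas, so the substance is the closed-graph condition: given a sequence $\mathbf x^k=(x^k_m)_{m\in\N}\in \mathcal D(T_Z)$ with $\mathbf x^k\to\mathbf x$ in $Y_Z$ and $T_Z\mathbf x^k\to\mathbf y$ in $Y_0$, one must show that $\mathbf x\in\mathcal D(T_Z)$ and $T_Z\mathbf x=\mathbf y$.

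First I would reduce the two convergence hypotheses, which are stated in the global norm $\lVert\cdot\rVert_\infty$, to componentwise statements in $X$. Indeed, for each fixed $m\in\N$ we have $\lVert x^k_m-x_m\rVert_m\le\lVert\mathbf x^k-\mathbf x\rVert_\infty$, and since $\lVert\cdot\rVert_m$ is equivalent to $\lVert\cdot\rVert$ this gives $x^k_m\to x_m$ in $X$; similarly $(T_Z\mathbf x^k)_m\to y_m$ in $X$ for every $m$. In particular $x^k_1\in Z$ and $Z$ closed yield $x_1\in Z$ (so $\mathbf x\in Y_Z$, which in any case is already part of the hypothesis), and $y_1=\lim_k (T_Z\mathbf x^k)_1=0$, consistently with $\mathbf y\in Y_0$.

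Next, fixing $m\in\N$ and using that $A_m\in B(X)$ is continuous, I would let $k\to\infty$ in
\[
(T_Z\mathbf x^k)_{m+1}=(m+1)\big(x^k_{m+1}-A_m x^k_m\big)
\]
to obtain $(m+1)\big(x_{m+1}-A_m x_m\big)=y_{m+1}$ for all $m\in\N$; the factor $m+1$ is harmless since $m$ is held fixed during the passage to the limit. Consequently
\[
\sup_{m\in\N}\big((m+1)\lVert x_{m+1}-A_m x_m\rVert_{m+1}\big)=\sup_{m\in\N}\lVert y_{m+1}\rVert_{m+1}\le\lVert\mathbf y\rVert_\infty<\infty,
\]
so $\mathbf x\in\mathcal D(T_Z)$, and the same identity together with $(T_Z\mathbf x)_1=0=y_1$ gives $T_Z\mathbf x=\mathbf y$, completing the proof.

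I do not expect a genuine obstacle here; the only point that needs a little care is the interplay between the two topologies, namely the step of converting $\lVert\cdot\rVert_\infty$-convergence into componentwise convergence in $X$ (legitimate precisely because each $\lVert\cdot\rVert_m$ is equivalent to $\lVert\cdot\rVert$) before the continuity of the individual operators $A_m$ can be invoked.
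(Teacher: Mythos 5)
Your proof is correct and follows essentially the same route as the paper's: pass from $\lVert\cdot\rVert_\infty$-convergence to componentwise convergence in $X$ (using the equivalence of $\lVert\cdot\rVert_m$ with $\lVert\cdot\rVert$), then take limits in the defining formula using continuity of $A_m$. You simply spell out the componentwise reduction and the verification that $\mathbf x\in\mathcal D(T_Z)$ more explicitly than the paper does, which is fine.
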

\begin{proof}
Let $(\mathbf x^k)_{k\in \N}$  be a sequence in $\mathcal D(T_Z)$ converging to $\mathbf x\in Y_Z$ such that $T_Z\mathbf x^k$ converges to $\mathbf y\in Y_0$. For each $m\in \N$, we have (recall that each $\lVert \cdot \rVert_m$ is equivalent to $\lVert \cdot \rVert$) that
\[
\begin{split}
(m+1)(x_{m+1}-A_m x_m)&= \lim_{k\to \infty} (m+1)(x_{m+1}^k-A_m x_m^k) \\
&=\lim_{k\to \infty} (T_Z \mathbf x^k)_{m+1} \\
&=y_{m+1}.
\end{split}
\]
Hence, $T_Z\mathbf x=\mathbf y$ and consequently $\mathbf x\in \mathcal D(T_Z)$. We conclude that $T_Z$ is a closed operator. 
\end{proof}
For $\mathbf x\in \mathcal D(T_Z)$, let 
\[
\lVert \mathbf x\rVert_{T_Z}:=\lVert \mathbf x\rVert_\infty+\lVert T_Z\mathbf x\rVert_\infty.
\]
It follows from Proposition~\ref{prop1} that $(\mathcal D(T_Z), \lVert \cdot \rVert_{T_Z})$ is a Banach space. Furthermore, the operator
\[
T_Z \colon (\mathcal D(T_Z), \lVert \cdot \rVert_{T_Z}) \to Y_0
\]
is bounded and from now on we will denote it simply by $T_Z$.

\section{Main results}\label{MR}
In this section we present the main results of our paper. More precisely, we show that the notion of a polynomial dichotomy with respect to a sequence of norms can be characterized in terms of spectral properties of operators $T_Z$.
\begin{theorem}\label{t2}
If the sequence $(A_m)_{m\in \N}$ admits a polynomial dichotomy with respect to the sequence of norms $\lVert \cdot \rVert_m$,  then for the closed subspace  $Z=\Ima Q_1$ the operator $T_Z$ is invertible. 
\end{theorem}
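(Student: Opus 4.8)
The plan is to show that the bounded operator $T_Z\colon(\mathcal D(T_Z),\lVert\cdot\rVert_{T_Z})\to Y_0$ is a bijection; its inverse is then automatically bounded by the open mapping theorem, although the construction below also produces the bound on $T_Z^{-1}$ explicitly. Fix $\mathbf y=(y_m)_{m\in\N}\in Y_0$ and set $f_k=y_k/k$ for $k\in\N$, so that $f_1=0$ and $\lVert f_k\rVert_k\le\lVert\mathbf y\rVert_\infty/k$. Solving $T_Z\mathbf x=\mathbf y$ with $\mathbf x\in\mathcal D(T_Z)$ is equivalent to finding $\mathbf x=(x_m)_{m\in\N}\in Y$ with $x_1\in Z$ and $x_{m+1}=A_mx_m+f_{m+1}$ for all $m\in\N$; indeed, once such $\mathbf x$ is found, $(m+1)\lVert x_{m+1}-A_mx_m\rVert_{m+1}=\lVert y_{m+1}\rVert_{m+1}\le\lVert\mathbf y\rVert_\infty$, so $\mathbf x\in\mathcal D(T_Z)$ automatically and $T_Z\mathbf x=\mathbf y$. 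Following the classical Green-function (test-sequence) recipe, I would define
\[
x_m=\sum_{k=1}^{m}\cA(m,k)P_kf_k-\sum_{k=m+1}^{\infty}\cA(m,k)Q_kf_k,\qquad m\in\N.
\]

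The core of the argument is to verify that this is well defined and that $\mathbf x\in Y$. Using \eqref{d1} with $n=k\le m$,
\[
\sum_{k=1}^{m}\lVert\cA(m,k)P_kf_k\rVert_m\le D\lVert\mathbf y\rVert_\infty\,m^{-\lambda}\sum_{k=1}^{m}k^{\lambda-1}\le DC_\lambda\lVert\mathbf y\rVert_\infty,
\]
where $\sum_{k=1}^{m}k^{\lambda-1}\le C_\lambda m^{\lambda}$ is an elementary estimate valid for $\lambda>0$; and using \eqref{d2} with $n=k>m$,
\[
\sum_{k=m+1}^{\infty}\lVert\cA(m,k)Q_kf_k\rVert_m\le D\lVert\mathbf y\rVert_\infty\,m^{\lambda}\sum_{k=m+1}^{\infty}k^{-\lambda-1}\le \frac{D}{\lambda}\lVert\mathbf y\rVert_\infty.
\]
Since each $\lVert\cdot\rVert_m$ is equivalent to $\lVert\cdot\rVert$, the second series converges in $X$, so $x_m$ is well defined and $\lVert\mathbf x\rVert_\infty\le D(C_\lambda+\lambda^{-1})\lVert\mathbf y\rVert_\infty$.

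Three routine checks finish the proof. A direct computation from the formula — using $A_m\cA(m,k)=\cA(m+1,k)$ for $k\le m$, the commutation relation \eqref{pro}, and the identity $\cA(m,k)=(A_m\rvert_{\Ker P_m})^{-1}\cA(m+1,k)$ on $\Ker P_k$ for $k>m$ — shows that $x_{m+1}=A_mx_m+f_{m+1}$, the decomposition $f_{m+1}=P_{m+1}f_{m+1}+Q_{m+1}f_{m+1}$ supplying the missing $k=m+1$ terms of the two series. Next, $x_1=-\sum_{k\ge2}\cA(1,k)Q_kf_k$ is a convergent series of vectors in $\Ker P_1=\Ima Q_1=Z$, and $Z$ is closed, so $x_1\in Z$; hence $\mathbf x\in\mathcal D(T_Z)$ and $T_Z\mathbf x=\mathbf y$, giving surjectivity. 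Finally, if $T_Z\mathbf x=0$ with $x_1\in Z$, then $x_{m+1}=A_mx_m$, so $x_m=\cA(m,1)x_1=\cA(m,1)Q_1x_1$ (because $Q_1x_1=x_1$), and therefore $\lVert Q_1x_1\rVert_1=\lVert\cA(1,m)Q_mx_m\rVert_1\le Dm^{-\lambda}\lVert\mathbf x\rVert_\infty\to0$ by \eqref{d2}; thus $x_1=0$ and $\mathbf x=0$, giving injectivity. Consequently $T_Z$ is invertible, with $\lVert T_Z^{-1}\rVert\le D(C_\lambda+\lambda^{-1})+1$.

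I expect the genuine obstacle to lie in the two estimates of the second paragraph: this is the only point at which the polynomial decay rates of \eqref{d1}--\eqref{d2} must be balanced precisely against the weight $1/k$ in $f_k=y_k/k$ (equivalently, against the factor $m+1$ built into the definition of $T_Z$), and it is what dictates the particular normalisation of $T_Z$. Everything else — the variation-of-constants identity for $\mathbf x$, the membership $x_1\in Z$, and uniqueness — is mechanical once those sums are under control.
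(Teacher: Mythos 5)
Your proposal is correct and follows essentially the same route as the paper: the same Green-function formula $x_m=\sum_{k\le m}\tfrac1k\cA(m,k)P_ky_k-\sum_{k>m}\tfrac1k\cA(m,k)Q_ky_k$, the same two summation estimates against \eqref{d1} and \eqref{d2}, the same telescoping verification that $T_Z\mathbf x=\mathbf y$, and the same decay-to-zero argument for injectivity. The only cosmetic difference is that the paper first normalises $\lambda\in(0,1)$ so that the integral comparison $\sum_{k\le n}k^{\lambda-1}\le 1+\int_1^n t^{\lambda-1}\,dt$ applies directly, whereas you invoke the bound $\sum_{k\le m}k^{\lambda-1}\le C_\lambda m^\lambda$ valid for all $\lambda>0$; both are correct, and your explicit remark that $x_1$ lies in the closed subspace $Z$ as a convergent series in $\Ima Q_1$ makes a small point more visible than the paper does.
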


\begin{proof}
Let $Z=\Ima Q_1$ and let us show that $T_Z$ is an invertible operator. We begin by showing that $T_Z$ is surjective.  Observe that without any loss of generality we can assume that~\eqref{d1} and~\eqref{d2} hold with $\lambda \in (0, 1)$.
 Choose $\mathbf y=(y_n)_{n\in \N}\in Y_0$ and define a sequence $\mathbf x=(x_n)_{n\in \N}\subset X$ by 
\begin{equation}\label{xn}
x_n=\sum_{k=1}^n \frac 1 k \cA(n, k)P_k y_k-\sum_{k=n+1}^\infty \frac{1}{k}\cA(n, k)Q_k y_k, \quad n\in \N.
\end{equation}
Observe that it follows from~\eqref{d1} that 
\begin{align}\label{438}
\begin{split}
\bigg{\lVert} \sum_{k=1}^n \frac 1 k \cA(n, k)P_k y_k \bigg{\rVert}_n &\le \sum_{k=1}^n \frac{1}{k}\lVert \cA(n,k)P_ky_k\rVert_n \\
&\le D\sum_{k=1}^n \frac 1 k (n/k)^{-\lambda}\lVert y_k\rVert_k \\
&\le Dn^{-\lambda}\lVert \mathbf y\rVert_\infty \sum_{k=1}^n \frac{1}{k^{-\lambda+1}} \\
&\le D n^{-\lambda}\lVert \mathbf y\rVert_\infty \bigg{(}1+\int_1^n t^{\lambda-1}\, dt \bigg{)} \\
&=D n^{-\lambda}\lVert \mathbf y\rVert_\infty (1+n^\lambda/\lambda-1/\lambda), 
\end{split}
\end{align}
for each $n\in \N$ and thus 
\begin{equation}\label{137}
\sup_{n\in \N}\bigg{\lVert} \sum_{k=1}^n \frac 1 k \cA(n, k)P_k y_k \bigg{\rVert}_n <\infty. 
\end{equation}
Similarly, \eqref{d2} implies that 
\[
\begin{split}
\bigg{\lVert}\sum_{k=n+1}^\infty \frac{1}{k}\cA(n, k)Q_k y_k \bigg{\rVert}_n &\le \sum_{k=n+1}^\infty \frac 1 k \lVert \cA(n, k)Q_k y_k\rVert_n \\
&\le D\sum_{k=n+1}^\infty \frac 1 k (k/n)^{-\lambda} \lVert y_k\rVert_k \\
&\le Dn^\lambda \lVert \mathbf y\rVert_\infty \sum_{k=n+1}^\infty \frac{1}{k^{\lambda+1}}\\
&\le Dn^\lambda \lVert \mathbf y\rVert_\infty \int_n^\infty t^{-\lambda -1}\, dt \\
&\le \frac{D}{\lambda}  \lVert \mathbf y\rVert_\infty .
\end{split}
\]
We conclude that
\begin{equation}\label{138}
\sup_{n\in \N}\bigg{\lVert}\sum_{k=n+1}^\infty \frac{1}{k}\cA(n, k)Q_k y_k \bigg{\rVert}_n <\infty.
\end{equation}
By~\eqref{137} and~\eqref{138}, we have that $\mathbf x\in Y$. Since $y_1=0$, it follows from~\eqref{xn} that $x_1\in Z$ and thus $\mathbf x\in Y_Z$. Furthermore, we have that
\[
\begin{split}
x_{n+1}-A_nx_n &=\sum_{k=1}^{n+1} \frac 1 k \cA(n+1, k)P_k y_k-A_n\sum_{k=1}^n \frac 1 k \cA(n, k)P_k y_k \\
&\phantom{=}-\sum_{k=n+2}^\infty \frac{1}{k}\cA(n+1, k)Q_k y_k+A_n\sum_{k=n+1}^\infty \frac{1}{k}\cA(n, k)Q_k y_k \\
&=\sum_{k=1}^{n+1} \frac 1 k \cA(n+1, k)P_k y_k-\sum_{k=1}^n \frac 1 k \cA(n+1, k)P_k y_k \\
&\phantom{=}-\sum_{k=n+2}^\infty \frac{1}{k}\cA(n+1, k)Q_k y_k+\sum_{k=n+1}^\infty \frac{1}{k}\cA(n+1, k)Q_k y_k \\
&=\frac{1}{n+1}P_{n+1}y_{n+1}+\frac{1}{n+1}Q_{n+1}y_{n+1}\\
&=\frac{1}{n+1}y_{n+1},
\end{split}
\]
for each $n\in \N$ and thus $T_Z\mathbf x=\mathbf y$.

Let us now prove that $T_Z$ is injective. Take $\mathbf x=(x_n)_{n\in \N}\in Y_Z$ such that $T_Z\mathbf x=0$. We have that $x_n=\cA(n, 1)x_1$ for each $n\in \N$. Therefore, it follows from~\eqref{d2} that 
\[
\lVert x_1\rVert_1=\lVert \cA(1, n)x_n\rVert_1 \le Dn^{-\lambda}\lVert x_n\rVert_n \le Dn^{-\lambda}\lVert \mathbf x\rVert_\infty,
\]
for every $n\in \N$. By passing to the limit when $n\to \infty$, we conclude that $x_1=0$ and thus $x_n=0$ for each $n\in \N$. We conclude that $\mathbf x=0$. 
\end{proof}

\begin{theorem}\label{t1}
Assume that there exists a closed subspace $Z\subset X$ such that $T_Z$ is an invertible operator. Furthermore, suppose that there exist $M, a>0$ such that
\begin{equation}\label{pb}
\lVert \cA(m, n)x\rVert_m \le M (m/n)^a \lVert x\rVert_n \quad \text{for $m\ge n$ and $x\in X$.}
\end{equation}
 Then,  $(A_m)_{m\in \N}$ admits a polynomial dichotomy with respect to the sequence of norms $\lVert \cdot \rVert_m$. 
\end{theorem}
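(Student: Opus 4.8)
The plan is to run the classical test-sequence (Perron) method, using the invertibility of $T_Z$ and the polynomial growth bound \eqref{pb} to reconstruct the dichotomy data. First I would extract the projections. For each $n \in \N$, say that $v \in X$ belongs to the "stable" set $S_n$ if the forward orbit $(\cA(m,n)v)_{m \ge n}$, padded with zeros for $m < n$ and suitably weighted, lies in the image of $T_Z$ applied to a bounded sequence; more precisely, the natural guess is that $S_n = \{v : \sup_{m \ge n}\lVert \cA(m,n)v\rVert_m < \infty\}$ together with a control on the defect $x_{m+1}-A_m x_m$ forcing it to be the genuine orbit. Since $T_Z$ is invertible, for every $\mathbf y \in Y_0$ there is a unique bounded $\mathbf x \in Y_Z$ with $x_{m+1}-A_m x_m = \frac{1}{m+1} y_{m+1}$; feeding in $\mathbf y = 0$ shows the only bounded orbit starting in $Z$ is zero, and more importantly, for a test input supported at a single coordinate one reads off that a bounded orbit through a given $v$ at time $n$ exists and is unique modulo $Z$-type indeterminacy. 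The standard outcome is: $S_n$ is a closed subspace, and one defines $P_n$ as the projection onto $S_n$. I would then check the invariance \eqref{pro}: $A_m S_m \subseteq S_{m+1}$ is immediate from the definition, and surjectivity of $A_m|_{\Ker P_m}$ comes from solvability of the backward equation, again supplied by $T_Z^{-1}$.

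Second, I would establish the complementarity $X = \Ima P_n \oplus \Ker P_n$ with uniformly bounded projections. The key identity is the representation formula mirroring \eqref{xn}: given $v$, split the solution of the single-coordinate test problem into its "past" part (built from $P_k$) and "future" part (built from $Q_k$); invertibility of $T_Z$ with $\lVert T_Z^{-1}\rVert =: C$ gives $\lVert \mathbf x \rVert_\infty \le C \lVert \mathbf y\rVert_\infty$, and by choosing $\mathbf y$ to be $(n+1)$-times a unit vector in coordinate $n+1$ one extracts $\lVert P_n\rVert_n \le C$ (equivalently a bound on $\lVert \cdot \rVert$ after using equivalence of norms, though the cleanest statement stays in $\lVert \cdot \rVert_n$). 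This is where the argument of Theorem~\ref{t2} runs in reverse: there, invertibility was the conclusion; here it is the hypothesis, and the bound on $T_Z^{-1}$ is the quantitative input for everything that follows.

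Third, I would derive the decay estimates \eqref{d1} and \eqref{d2}. For \eqref{d1}: fix $n$ and $v \in \Ima P_n$, consider the orbit $\xi_m = \cA(m,n)P_n v$ (zero for $m<n$); it is bounded by the previous step, $\sup_m \lVert \xi_m \rVert_m \le C_1 \lVert v\rVert_n$. To upgrade boundedness to polynomial decay, feed into $T_Z^{-1}$ the sequence $\mathbf y$ with $y_{m+1} = (m+1)\,\mathrm{sgn}\text{-type weight}\cdot \xi_{m+1}$ truncated appropriately, or more directly apply $T_Z^{-1}$ to a "ramp" input that is $1$ on a block $[n, N]$ and $0$ outside; the resulting bounded solution on that block must, by uniqueness, coincide with a rescaled orbit, and comparing at the two endpoints yields $\lVert \xi_m\rVert_m \le D (m/n)^{-\lambda}\lVert v\rVert_n$ for a $\lambda \in (0,1)$ determined by $C$ and the block length one can afford. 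The dual construction with $Q_n$, using the backward orbit $\cA(m,n)Q_n v$ for $m \le n$ which is bounded by invertibility on $\Ker P$, gives \eqref{d2}. Here \eqref{pb} is needed to guarantee that these orbits do not blow up faster than polynomially off the reconstructed invariant subspaces, so that the truncation/block arguments close.

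The main obstacle I anticipate is the third step: turning qualitative boundedness of orbits into the quantitative polynomial rate $(m/n)^{-\lambda}$. Boundedness alone is cheap; the decay requires a genuinely multiplicative/iterative argument — one typically shows that over each "polynomial doubling" interval (where $m/n$ crosses a fixed ratio $\rho>1$) the norm drops by a fixed factor $<1$ controlled by $\lVert T_Z^{-1}\rVert$, and then iterates. Making the bookkeeping work with the time-dependent weights $(m+1)$ in the definition of $T_Z$ and with the sequence of norms $\lVert\cdot\rVert_m$ (rather than a single norm) is the delicate part, and is exactly where one leans on the hypothesis \eqref{pb} and on the precise form of the operator $T_Z$; I would handle it by an explicit choice of test sequences analogous to \eqref{xn}, adapted so that the partial sums telescope against the desired geometric decay.
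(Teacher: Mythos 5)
Your overall plan is the same as the paper's: define the stable subspaces by boundedness of forward orbits, prove a direct sum decomposition from test sequences and invertibility of $T_Z$, extract decay and expansion rates, and finally bound the projections. But two of your steps contain genuine gaps.

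First, the uniform bound $\lVert P_n\rVert_n\le C$ does not follow from a single-coordinate test input in the way you describe. If $\mathbf y$ is supported at coordinate $n+1$ with $y_{n+1}=(n+1)v$, then $\lVert \mathbf y\rVert_\infty=(n+1)\lVert v\rVert_{n+1}$, so $\lVert T_Z^{-1}\mathbf y\rVert_\infty\le (n+1)\lVert T_Z^{-1}\rVert\,\lVert v\rVert_{n+1}$; reading off $P_{n+1}v=x_{n+1}$ gives only $\lVert P_{n+1}\rVert_{n+1}\le (n+1)\lVert T_Z^{-1}\rVert$, which is useless as $n\to\infty$. The weight $(m+1)$ in the definition of $T_Z$ is precisely what defeats this naive estimate. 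The paper instead obtains the uniform bound on the projections \emph{after} the decay and expansion rates are in hand, by bounding from below the angle $\gamma_n$ between $X(n)$ and $Z(n)$: for unit $v^s\in X(n)$, $v^u\in Z(n)$, apply~\eqref{pb} together with the decay/expansion estimates at $m=N_0n$ to show $\lVert v^s+v^u\rVert_n\ge c>0$ uniformly, and then invoke the standard lemma $\lVert P_n\rVert\le 2/\gamma_n$. You need to route your argument through the rate estimates; the direct test-sequence bound on $P_n$ does not close.

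Second, the ``ramp'' test sequence you propose for Step 3 (input equal to a fixed unit vector on a block) does not produce what you need: $T_Z^{-1}$ applied to such an input is a genuine inhomogeneous sum, not a rescaled orbit, so the ``by uniqueness it coincides with a rescaled orbit'' step fails. The correct test sequence, which is where the whole proof lives, sets $y_j=\cA(j,n)x/\lVert \cA(j,n)x\rVert_j$ on the block $n<j\le m$ and $0$ elsewhere. Then the unique solution satisfies $x_k=\bigl(\sum_{j=n+1}^{k}\tfrac{1}{j\lVert \cA(j,n)x\rVert_j}\bigr)\cA(k,n)x$ on the block, so reading off at $k=m$ gives
\[
\lVert T_Z^{-1}\rVert \;\ge\; \lVert \cA(m,n)x\rVert_m\sum_{j=n+1}^{m}\frac{1}{j\,\lVert \cA(j,n)x\rVert_j}.
\]
Here is where~\eqref{pb} enters: it bounds $\lVert \cA(j,n)x\rVert_j\le M2^a\lVert x\rVert_n$ for $n<j\le 2n$, so restricting the sum to that range already yields a \emph{uniform} bound $\lVert \cA(m,n)x\rVert_m\le L\lVert x\rVert_n$. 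Feeding this uniform bound back into the same inequality and using $\sum_{j=n+1}^{N_0n}1/j\ge\log N_0-1$ gives a fixed contraction factor $e^{-1}$ for $m\ge N_0n$, which then iterates to the rate $(m/n)^{-\lambda}$ with $\lambda=1/\log N_0$. Your sketch identifies the right iteration structure and the right role for~\eqref{pb}, but it does not supply the mechanism (the harmonic sum coming from the normalized-orbit test sequence) that makes the contraction factor appear; without it the proof does not close.

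A smaller omission: you assert that surjectivity of $A_m\rvert_{\Ker P_m}$ comes from ``solvability of the backward equation,'' but the paper gets invariance and invertibility on the unstable bundle for free from the definition $Z(n)=\cA(n,1)Z$: surjectivity of $A_n\rvert_{Z(n)}$ is automatic, and injectivity follows because $A_nv=0$ forces $v\in X(n)\cap Z(n)=\{0\}$ by the direct sum.
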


\begin{proof}
For each $n\in \N$, let
\[
X(n)=\bigg{\{}x\in X: \sup_{m\ge n}\lVert \cA(m, n)x\rVert_m <\infty \bigg{\}} \quad \text{and} \quad Z(n)=\cA(n, 1)Z.
\]
Note that $X(n)$ and $Z(n)$ are subspaces of $X$ for each $n\in \N$.
\begin{lemma}
For $n\in \N$, we have 
\begin{equation}\label{oplus}
X=X(n)\oplus Z(n).
\end{equation}
\end{lemma}

\begin{proof}[Proof of the lemma]
Let us first consider the case when $n\ge 2$. Take $v\in X$ and define  a sequence $\mathbf y=(y_m)_{m\in \N}\subset X$ by 
\[
y_m=\begin{cases}
v & \text{if $m=n$,} \\
0 & \text{if $m\neq n$.}
\end{cases}
\]
Obviously $\mathbf y\in Y_0$. Since $T_Z$ is invertible, there exists $\mathbf x=(x_m)_{m\in \N}\in Y_Z$ such that $T_Z\mathbf x=\mathbf y$. Hence, 
\[
x_m-A_{m-1}x_{m-1}=\begin{cases}
\frac 1 n v & \text{if $m=n$,} \\
0 & \text{if $m\neq n$.}
\end{cases}
\]
In particular, 
\[
\sup_{m\ge n}\lVert \cA(m, n)nx_n\rVert_m =n\sup_{m\ge n} \lVert x_m\rVert_m\le n\lVert \mathbf x\rVert_\infty <\infty, 
\]
and thus $nx_n \in X(n)$. On the other hand, 
\[
-nA_{n-1}x_{n-1}=\cA(n, 1)(-nx_1)\in \cA(n, 1)Z=Z(n).
\]
Since $v=nx_n-nA_{n-1}x_{n-1}$, we conclude that $v\in X(n)+Z(n)$.

Take now $v\in X(n)\cap Z(n)$. Choose $w\in Z$ such that $v=\cA(n, 1)w$. Let us define a sequence $\mathbf x=(x_m)_{m\in \N}$ by
\[
x_m=\cA(m, 1)w, \quad m\in \N.
\]
Observe that $\mathbf x\in Y_Z$ and $T_Z\mathbf x=0$. Since $T_Z$ is injective we have that $\mathbf x=0$ and consequently $v=0$.  We conclude that~\eqref{oplus} holds.

Let us now establish~\eqref{oplus} for $n=1$. Take $v\in X$ and define the sequences
\[
\mathbf x^1=(v, 0, 0, \ldots) \quad \text{and} \quad \mathbf y^1=(0, -2A_1v, 0, 0, \ldots ).
\]
Note that 
\[
x_{m+1}^1-A_m x_m^1=\frac{1}{m+1}y_{m+1}^1 \quad \text{for $m\in \N$.}
\]
On the other hand, since $\mathbf y^1\in Y_0$, there exists $\mathbf x^2=(x_m^2)_{m\in \N}\in Y_Z$  such that $T_Z\mathbf x^2=\mathbf y^1$. Consequently, 
\[
x_m^1-x_m^2=\cA(m, 1)(v-x_1^2)
\]
for $m\in \N$. Since $\mathbf x^1-\mathbf x^2\in Y$, we have that $v-x_1^2\in X(1)$. On the other hand, since $\mathbf x^2\in Y_Z$ we have that $x_1^2\in Z$ and thus 
\[
v=v-x_1^2+x_1^2\in X(1)+Z=X(1)+Z(1).
\]
Take now $v\in X(1)\cap Z(1)$ and define  a sequence $\mathbf x=(x_m)_{m\in \N}$ by 
\[
x_m=\cA(m, 1)v, \quad m\in \N.
\]
Clearly, $\mathbf x\in Y_Z$ and $T_Z\mathbf x=0$. Hence, $\mathbf x=0$ and thus $v=0$. We have proved that~\eqref{oplus} holds for $n=1$ also.  Hence, the proof of the lemma is completed. 
\end{proof}
Observe that
\begin{equation}\label{direct}
A_nX(n)\subset X(n+1) \quad \text{and} \quad A_nZ(n)= Z(n+1),
\end{equation}
for each $n\in \N$.
\begin{lemma}
For each $n\in \N$,
\[
A_n \rvert_{Z(n)} \colon Z(n) \to Z(n+1)
\]
is an invertible linear map. 
\end{lemma}

\begin{proof}[Proof of the lemma]
Clearly, $A_n \rvert_{Z(n)}$ is surjective.  
Assume now that there exists $v\in Z(n)$ such that $A_n v=0$.  Hence, $\cA(m, n)v=0$ for $m>n$ and therefore $v\in X(n)$. Therefore, it follows from~\eqref{oplus} that
$v\in X(n)\cap Z(n)=\{0\}$. 

\end{proof}
We now show that vectors in $X(n)$ exhibit uniform polynomial decay under the action of the cocycle $\cA$. 
\begin{lemma}
There exist $D, \lambda >0$ such that
\begin{equation}\label{201}
\lVert \cA(m, n)x\rVert_m \le D(m/n)^{-\lambda}\lVert x\rVert_n, \quad \text{for $m\ge n$ and $x\in X(n)$.}
\end{equation}
\end{lemma}

\begin{proof}[Proof of the lemma]
We first claim that there exists $L>0$ such that 
\begin{equation}\label{o}
\lVert \cA(m, n)x\rVert_m \le L\lVert x\rVert_n \quad \text{for $m\ge n$ and $x\in X(n)$.}
\end{equation}
Let us first consider the case when $m\ge 2n$ and take $x\in X(n)$ such that $\cA(m, n)x\neq 0$.  Consequently, $\cA(k,n)x\neq 0$ for $n+1\le k \le m$. Let us consider sequences $\mathbf y=(y_k)_{k\in \N}$ and $\mathbf x=(x_k)_{k\in \N}$ defined by 
\[
y_k=\begin{cases}
0 & \text{if $1\le k \le n$,} \\
\frac{\cA(k, n)x}{\lVert \cA(k, n)x\rVert_k} & \text{if $n+1\le k \le m$,}\\
0 & \text{if $k>m$,}
\end{cases}
\]
and 
\[
x_k=\begin{cases}
0 & \text{if $1\le k \le n$,} \\
\sum_{j=n+1}^k \frac{\cA(k, n)x}{j\lVert \cA(j, n)x\rVert_j} & \text{if $n+1\le k \le m$,}\\
\sum_{j=n+1}^m \frac{\cA(k, n)x}{j\lVert \cA(j, n)x\rVert_j} & \text{if $k>m$.}
\end{cases}
\]
Note that $\mathbf y\in Y_0$. Furthermore, since $x\in X(n)$ we have that $\mathbf x\in Y_Z$. It is  straightforward to verify that $T_Z\mathbf x=\mathbf y$. Consequently,
\[
\lVert \mathbf x\rVert_\infty \le \lVert \mathbf x\rVert_{T_Z}=\lVert T_Z^{-1}\mathbf y\rVert_{T_Z}\le \lVert T_Z^{-1}\rVert \cdot \lVert \mathbf y\rVert_\infty \le  \lVert T_Z^{-1}\rVert.
\]
Therefore, 
\begin{equation}\label{1245}
\lVert T_Z^{-1}\rVert \ge \lVert \mathbf x\rVert_\infty \ge \lVert x_m\rVert_m = \lVert \cA(m, n)x\rVert_m \sum_{j=n+1}^m \frac{1}{j\lVert \cA(j, n)x\rVert_j},
\end{equation}
and thus 
\[
\lVert T_Z^{-1}\rVert \ge \lVert \cA(m, n)x\rVert_m \sum_{j=n+1}^{2n} \frac{1}{j\lVert \cA(j, n)x\rVert_j}.
\]
On the other hand, \eqref{pb} implies that
\[
\lVert \cA(j, n)x\rVert_j \le M(j/n)^a \lVert x\rVert_n \le M2^a\lVert x\rVert_n, \quad \text{for $n< j \le 2n$.}
\]
Hence, 
\[
\lVert T_Z^{-1}\rVert \ge \frac{\lVert \cA(m, n)x\rVert_m}{M2^a\lVert x\rVert_n} \sum_{j=n+1}^{2n} \frac 1 j \ge  \frac{\lVert \cA(m, n)x\rVert_m}{M2^{a+1}\lVert x\rVert_n},
\]
which yields 
\begin{equation}\label{x1}
\lVert \cA(m, n)x\rVert_m \le M2^{a+1}\lVert T_Z^{-1}\rVert \cdot \lVert x\rVert_n.
\end{equation}
Moreover, \eqref{pb} implies that 
\begin{equation}\label{x2}
\lVert \cA(m, n)x\rVert_m  \le M2^a \lVert x\rVert_n \quad \text{for $n\le m\le 2n$ and $x\in X$.}
\end{equation}
By~\eqref{x1} and~\eqref{x2}, we conclude that~\eqref{o} holds with
\[
L:=\max \{M2^a, M2^{a+1}\lVert T_Z^{-1}\rVert \}>0. 
\]
We next show that there exists $N_0\in \N$  such that
\begin{equation}\label{1257}
\lVert \cA(m, n)x\rVert_m \le e^{-1} \lVert x\rVert_n \quad \text{for $m\ge N_0n$ and $x\in X(n)$.}
\end{equation}
Since 
\begin{equation}\label{hs}
\sum_{j=n+1}^{N_0n}\frac{1}{j}\ge \log (N_0n+1)-\log n-1 \ge \log N_0-1,
\end{equation}
we have (using~\eqref{o} and~\eqref{1245}) that 
\[
\begin{split}
\lVert T_Z^{-1}\rVert &\ge \lVert \cA(m, n)x\rVert_m \sum_{j=n+1}^m \frac{1}{j\lVert \cA(j, n)x\rVert_j} \\
&\ge \lVert \cA(m, n)x\rVert_m \sum_{j=n+1}^m \frac{1}{jL\lVert x\rVert_n} \\
&\ge \lVert \cA(m, n)x\rVert_m \sum_{j=n+1}^{N_0n} \frac{1}{jL\lVert x\rVert_n}  \\
&\ge  \frac{(\log N_0-1)\lVert \cA(m, n)x\rVert_m}{L\lVert x\rVert_n},
\end{split}
\]
and thus
\[
\lVert \cA(m, n)x\rVert_m \le \frac{L\lVert T_Z^{-1}\rVert}{\log N_0-1}\lVert x\rVert_n.
\]
Hence, if choose $N_0$ large enough so that
\[
\frac{L\lVert T_Z^{-1}\rVert}{\log N_0-1}\le e^{-1},
\]
we conclude that~\eqref{1257} holds. 

Take now arbitrary $m\ge n$, $x\in X(n)$  and choose largest $l\in \N\cup \{0\}$ such that $N_0^l \le m/n$.  It follows from~\eqref{o} and~\eqref{1257} that 
\[
\lVert \cA(m, n)x\rVert_m =\lVert \cA(m, N_0^ln)\cA(N_0^l n, n)x\rVert_m \le Le^{-l}\lVert x\rVert_n.
\]
Since $m/n <N_0^{l+1}$, we have that
\[
l> \frac{\log m/n}{\log N_0}-1,
\]
and thus
\[
e^{-l}\le e(m/n)^{-1/\log N_0}.
\]
Consequently, 
\[
\lVert \cA(m, n)x\rVert_m  \le Le (m/n)^{-1/\log N_0} \lVert x\rVert_n, 
\]
and we conclude that~\eqref{201} holds with
\[
D=Le \quad \text{and} \quad \lambda=1/\log N_0.
\]
The proof of the lemma is completed. 
\end{proof}
Next we show that nonzero vectors in $Z(n)$ exhibit uniform polynomial expansion under the action of the cocycle $\cA$.
\begin{lemma}
There exist $D, \lambda >0$ such that
\begin{equation}\label{202}
\lVert \cA(m, n)x\rVert_m \ge D (n/m)^{-\lambda}\lVert x\rVert_n, \quad \text{for $m\le n$ and $x\in Z(n)$.}
\end{equation}
\end{lemma}

\begin{proof}[Proof of the lemma]
Take $z\in Z\setminus \{0\}$ and $n> 2$. We consider sequences $\mathbf y=(y_k)_{k\in \N}$ and $\mathbf x=(x_k)_{k\in \N}$ defined by
\[
y_k=\begin{cases}
0 & \text{if $k=1$;} \\
-\frac{\cA(k, 1)z}{\lVert \cA(k, 1)z\rVert_k} & \text{if $2\le k \le n$;}\\
0 & \text{if $k>n$,}
\end{cases} 
\]
and 
\[
x_k=\begin{cases}
\sum_{j=k+1}^n \frac{\cA(k, 1)z}{j\lVert \cA(j, 1)z\rVert_j} & \text{if $1\le k\le n-1$;}\\
0 & \text{if $k\ge n$.}
\end{cases}
\]
Observe that $\mathbf y\in Y_0$ and $\mathbf x\in Y_Z$ (note that $x_1=az$ for $a\in \mathbb R$). Furthermore, it is straightforward to verify that $T_Z\mathbf x=\mathbf y$. Hence,
\[
\lVert \mathbf x\rVert_\infty \le \lVert \mathbf x\rVert_{T_Z}=\lVert T_Z^{-1}\mathbf y\rVert_{T_Z}\le \lVert T_Z^{-1}\rVert \cdot \lVert \mathbf y\rVert_\infty \le \lVert T_Z^{-1}\rVert.
\]
Therefore, for each $1\le k\le n-1$, we have that
\[
\lVert T_Z^{-1}\rVert \ge \lVert \cA(k, 1)z\rVert_k \sum_{j=k+1}^n \frac{1}{j\lVert \cA(j, 1)z\rVert_j}.
\]
Letting $n\to \infty$, we conclude that 
\begin{equation}\label{re}
\lVert T_Z^{-1}\rVert \ge \lVert \cA(k, 1)z\rVert_k \sum_{j=k+1}^\infty \frac{1}{j\lVert \cA(j, 1)z\rVert_j},
\end{equation}
for each $k\in \N$ and $z\in Z\setminus \{0\}$.
We now claim that there exists $L>0$ such that 
\begin{equation}\label{215}
\lVert \cA(m, 1)z\rVert_m \ge L\lVert \cA(n, 1)z\rVert_n \quad \text{for $m\ge n$ and $z\in Z$.}
\end{equation}
Using~\eqref{pb} and~\eqref{re}, we have that 
\[
\begin{split}
\frac{1}{\lVert \cA(n,1)z\rVert_n} &\ge \frac{1}{\lVert T_Z^{-1}\rVert }\sum_{j=n+1}^\infty \frac{1}{j\lVert \cA(j, 1)z\rVert_j} \\
&\ge \frac{1}{\lVert T_Z^{-1}\rVert }\sum_{j=m+1}^{2m} \frac{1}{j\lVert \cA(j, 1)z\rVert_j} \\
&= \frac{1}{\lVert T_Z^{-1}\rVert }\sum_{j=m+1}^{2m} \frac{1}{j\lVert \cA(j, m)\cA(m, 1)z\rVert_j} \\
&\ge   \frac{1}{\lVert T_Z^{-1}\rVert }\sum_{j=m+1}^{2m} \frac{1}{j M(j/m)^a \lVert \cA(m, 1)z\rVert_m} \\
&\ge  \frac{1}{M2^a \lVert T_Z^{-1}\rVert \cdot \lVert \cA(m, 1)z\rVert_m }\sum_{j=m+1}^{2m}  \frac 1 j \\
&\ge \frac{1}{M2^{a+1} \lVert T_Z^{-1}\rVert \cdot \lVert \cA(m, 1)z\rVert_m},
\end{split}
\]
which readily implies that~\eqref{215} holds with 
\[
L=\frac{1}{M2^{a+1} \lVert T_Z^{-1}\rVert}. 
\]
We next claim that there exists $N_0\in \N$ such that 
\begin{equation}\label{239}
\lVert \cA(m, 1)z\rVert_m \ge e\lVert \cA(n, 1)z\rVert_n \quad \text{for $m\ge N_0n$ and $z\in Z$.}
\end{equation}
Indeed, it follows from~\eqref{re} and~\eqref{215} that 
\[
\begin{split}
\frac{1}{\lVert \cA(n,1)z\rVert_n} &\ge \frac{1}{\lVert T_Z^{-1}\rVert }\sum_{j=n+1}^\infty \frac{1}{j\lVert \cA(j, 1)z\rVert_j} \\
&\ge \frac{1}{\lVert T_Z^{-1}\rVert }\sum_{j=n+1}^{N_0n} \frac{1}{j\lVert \cA(j, 1)z\rVert_j} \\
&\ge \frac{L}{\lVert T_Z^{-1}\rVert \cdot \lVert \cA(m, 1)z\rVert_m}\sum_{j=n+1}^{N_0n} \frac 1 j \\
&\ge \frac{L(\log N_0-1)}{\lVert T_Z^{-1}\rVert \cdot \lVert \cA(m, 1)z\rVert_m},
\end{split}
\]
where in the last step we have used~\eqref{hs}. Consequently, if we choose $N_0$ such that
\[
\frac{L(\log N_0-1)}{\lVert T_Z^{-1}\rVert}\ge e, 
\]
we have that~\eqref{239} holds. Proceeding as in the proof of the previous lemma, one can easily conclude that there exist $D, \lambda>0$ such that
\begin{equation}\label{m}
\lVert \cA(m, 1)z\rVert_m \ge \frac 1 D (m/n)^\lambda \lVert \cA(n, 1)z\rVert_n \quad \text{for $m\ge n$ and $z\in Z$.}
\end{equation}
Take now $m\ge n$, $v\in Z(n)$ and choose $z\in Z$ such that $v=\cA(n, 1)z$. Hence, \eqref{m} gives that 
\[
\begin{split}
\lVert\cA(m, n)v\rVert_m &=\lVert \cA(m, 1)z\rVert_m  \\
&\ge \frac 1 D (m/n)^\lambda \lVert \cA(n, 1)z\rVert_n \\
&= \frac 1 D (m/n)^\lambda \lVert v\rVert_n,
\end{split}
\]
which readily implies that~\eqref{202} holds.
\end{proof}
By taking the maximum over the  two values of $D$ obtained in the previous two lemmas, we can assume that~\eqref{201} and~\eqref{202} hold with the same $D>0$.  Similarly, by taking the minimum of the two obtained values,  we can also  assume that those bounds hold with the same $\lambda$.

Let $P_n \colon X\to X(n)$ be the projection associated with the decomposition~\eqref{oplus} for $n\in \N$.  
\begin{lemma}
We have that~\eqref{pro} holds. 
\end{lemma}
\begin{proof}[Proof of the lemma]
Take an arbitrary $m\in \N$ and $x\in X$. Furthermore, write $x$ as $x=x_1+x_2$ with $x_1\in X(m)$ and $x_2\in Z(m)$. Then, $A_mx=A_mx_1+A_mx_2$ and  it follows from~\eqref{direct} that $A_m x_1\in X(m+1)$ and $A_m x_2\in Z(m+1)$. Hence, 
\[
P_{m+1}A_mx=A_mx_1=A_mP_m x,
\]
which yields the desired conclusion. 
\end{proof}

The final ingredient of the proof is the following lemma. 
\begin{lemma}
We have that
\begin{equation}\label{203}
\sup_{n\in \N}\lVert P_n\rVert <\infty. 
\end{equation}
\end{lemma}

\begin{proof}[Proof of the lemma]
For each $n\in \N$, let
\[
\gamma_n:=\inf \{\lVert v^s+v^u\rVert_n: \lVert v^s\rVert_n=\lVert v^u\rVert_n=1, \ v^s\in X(n), \ v^u \in Z(n)\}.
\]
Then (see~\cite[Lemma 4.2]{Sx2}), 
\begin{equation}\label{Pn}
\lVert P_n\rVert \le \frac{2}{\gamma_n}.
\end{equation}
Let us fix $v^s\in X(n)$  and $v^u \in Z(n)$ such that $\lVert v^s\rVert_n=\lVert v^u\rVert_n=1$. It follows from~\eqref{pb} that for all $m\ge n$,
\[
\lVert \cA(m, n)(v^s+v^u)\rVert_m \le M(m/n)^a \lVert v^s+v^u\rVert_n,
\]
and thus by~\eqref{201} and~\eqref{202} we have that 
\begin{align}\label{iu}
\begin{split}
\lVert v^s+v^u\rVert_n &\ge \frac{1}{M(m/n)^a} \lVert \cA(m, n)(v^s+v^u)\rVert_m \\
&\ge \frac{1}{M(m/n)^a} \bigg{(}\lVert \cA(m, n)v^u\rVert_m-\lVert \cA(m, n)v^s\rVert_m \bigg{)} \\
&\ge \frac{1}{M(m/n)^a} \bigg{(}\frac 1 D (m/n)^\lambda -D(m/n)^{-\lambda} \bigg{)}. \\
\end{split}
\end{align}
Choose now $N_0\in \N$ such that 
\[
\frac 1 DN_0^\lambda -DN_0^{-\lambda}>0. 
\]
Hence, it follows from~\eqref{iu} (by taking $m=N_0n$) that 
\[
\lVert v^s+v^u\rVert_n \ge \frac{1}{MN_0^a} \bigg{(}\frac 1 DN_0^\lambda -DN_0^{-\lambda} \bigg{)}=:c>0.
\]
Therefore, $\gamma_n \ge c$ and thus the conclusion of the lemma follows readily from~\eqref{Pn}.
\end{proof}
The conclusion of the theorem now follows directly from~\eqref{201}, \eqref{202} and~\eqref{203}.
\end{proof}

Let us now discuss Theorems~\ref{t2} and~\ref{t1} in the particular case of polynomial contractions and expansions.  We say that the sequence $(A_m)_{m\in \N}$ admits a \emph{polynomial contraction} with respect to the sequence of norms $\lVert \cdot \rVert_m$ if it admits a polynomial dichotomy with respect to the sequence of norms $\lVert \cdot \rVert_m$ and with
projections $P_m=\Id$, $m\in \N$.

Similarly, we say that the sequence $(A_m)_{m\in \N}$ admits a \emph{polynomial expansion} with respect to the sequence of norms $\lVert \cdot \rVert_m$ if it admits a polynomial dichotomy with respect to the sequence of norms $\lVert \cdot \rVert_m$ and with
projections $P_m=0$, $m\in \N$.

The following two results are direct consequences of Theorems~\ref{t2} and~\ref{t1}.
\begin{theorem}
The following two statements are equivalent:
\begin{itemize}
\item the sequence $(A_m)_{m\in \N}$ admits a polynomial contraction with respect to the sequence of norms $\lVert \cdot \rVert_m$;
\item there exist $M, a>0$ such that~\eqref{pb} holds and for each $\mathbf y=(y_n)_{n\in \N}\in Y_0$, the sequence $\mathbf x=(x_n)_{n\in \N}$ defined by
\begin{equation}\label{1:55}
x_n=\sum_{k=1}^n \frac 1 k\cA(n, k) y_k \quad n\in \N, 
\end{equation}
belongs to $Y_0$. 
\end{itemize}
\end{theorem}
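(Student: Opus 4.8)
The plan is to deduce this equivalence from Theorems~\ref{t2} and~\ref{t1} specialized to $Z=\{0\}$, after noting that formula~\eqref{1:55} is exactly formula~\eqref{xn} when all the operators $Q_k$ vanish, and that a polynomial contraction is precisely a polynomial dichotomy for which the subspaces $Z(n)$ appearing in the proof of Theorem~\ref{t1} are all trivial.

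For the forward implication I would argue as follows. If $(A_m)_{m\in\N}$ admits a polynomial contraction, then it admits a polynomial dichotomy with $P_m=\Id$ and $Q_m=0$, so~\eqref{d1} becomes $\lVert\cA(m,n)x\rVert_m\le D(m/n)^{-\lambda}\lVert x\rVert_n$ for $m\ge n$; since $(m/n)^{-\lambda}\le 1\le (m/n)^a$ for $m\ge n$ and any $a>0$, the bound~\eqref{pb} holds with $M=D$ and $a=1$. Given $\mathbf y\in Y_0$, the sequence~\eqref{1:55} coincides with~\eqref{xn} because $P_k=\Id$ and $Q_k=0$, and the estimates carried out in the proof of Theorem~\ref{t2} for the first sum in~\eqref{xn} (see~\eqref{438}--\eqref{137}, which use only~\eqref{d1}) give $\sup_{n\in\N}\lVert x_n\rVert_n<\infty$, so $\mathbf x\in Y$; since $x_1=\cA(1,1)y_1=y_1=0$, we conclude $\mathbf x\in Y_0$.

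For the converse, assuming~\eqref{pb} and that~\eqref{1:55} maps $Y_0$ into $Y_0$, I would first show that $T_0:=T_{\{0\}}$ is invertible. For surjectivity, given $\mathbf y\in Y_0$ let $\mathbf x$ be defined by~\eqref{1:55}; by hypothesis $\mathbf x\in Y_0$, and the telescoping identity $x_{m+1}-A_mx_m=\frac{1}{m+1}y_{m+1}$, which follows from $A_m\cA(m,k)=\cA(m+1,k)$, shows both that $\sup_{m}(m+1)\lVert x_{m+1}-A_mx_m\rVert_{m+1}\le\lVert\mathbf y\rVert_\infty<\infty$, so that $\mathbf x\in\mathcal D(T_0)$, and that $T_0\mathbf x=\mathbf y$. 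For injectivity, if $\mathbf x\in Y_0$ and $T_0\mathbf x=0$, then $x_{m+1}=A_mx_m$ for all $m$, so $x_n=\cA(n,1)x_1$, and $x_1=0$ since $\mathbf x\in Y_0$; hence $\mathbf x=0$. Being a bounded bijection between the Banach spaces $(\mathcal D(T_0),\lVert\cdot\rVert_{T_0})$ and $Y_0$, the operator $T_0$ is invertible.

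It then suffices to apply Theorem~\ref{t1} with $Z=\{0\}$: combined with~\eqref{pb}, the invertibility of $T_0$ yields a polynomial dichotomy for $(A_m)_{m\in\N}$ with respect to $\lVert\cdot\rVert_m$. Finally, in the proof of Theorem~\ref{t1} the subspaces $Z(n)=\cA(n,1)Z$ are all equal to $\{0\}$, so~\eqref{oplus} forces $X(n)=X$ and the projections constructed there are $P_n=\Id$ for every $n$; hence the dichotomy is in fact a polynomial contraction. The argument is mostly bookkeeping; the only step requiring mild care is this last one, since one must read off the identity projections from the construction inside the proof of Theorem~\ref{t1} rather than from its statement, the definition of polynomial contraction demanding the projections be the identity.
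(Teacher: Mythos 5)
Your proof is correct and takes essentially the same route as the paper: both directions reduce to Theorems~\ref{t2} and~\ref{t1} with $Z=\{0\}$, the forward direction via~\eqref{438}--\eqref{137} and the converse via invertibility of $T_{\{0\}}$. You merely fill in details the paper leaves implicit -- the explicit check that $T_{\{0\}}$ is a bounded bijection (hence invertible by the open mapping theorem) and the observation that $Z(n)=\cA(n,1)\{0\}=\{0\}$ forces $P_n=\Id$ in the construction inside Theorem~\ref{t1}, so one really does get a contraction rather than just a dichotomy.
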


\begin{proof}
Assume that the first statement holds. Obviously, \eqref{d1} (with $P_n=\Id$) implies that~\eqref{pb} holds with $M=D$ and any $a>0$. Furthermore, 
by proceeding as in~\eqref{438}, it is easy to show  for each $\mathbf y=(y_n)_{n\in \N}\in Y_0$, the sequence $\mathbf x=(x_n)_{n\in \N}$ defined by~\eqref{1:55} belongs to $Y_0$.

  Conversely, under the assumption that the second statement is valid, we have that $T_Z$ is invertible for $Z=\{0\}$. Then, Theorem~\ref{t1} implies that  the sequence $(A_m)_{m\in \N}$ admits a polynomial contraction with respect to the sequence of norms $\lVert \cdot \rVert_m$.
\end{proof}

\begin{theorem}
Assume that there exist $M, a>0$ such that~\eqref{pb} holds. Then, 
the following two statements are equivalent:
\begin{itemize}
\item the sequence $(A_m)_{m\in \N}$ admits a polynomial expansion with respect to the sequence of norms $\lVert \cdot \rVert_m$;
\item $T_Z$ is an invertible operator for $Z=X$.
\end{itemize}
\end{theorem}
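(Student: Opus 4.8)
The plan is to read both implications off Theorems~\ref{t2} and~\ref{t1}, in the same way that the corollary on polynomial contractions was obtained from them, with the role of $Z=\{0\}$ now played by $Z=X$.

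For the direction ``expansion $\Rightarrow$ $T_X$ invertible'' there is nothing delicate: a polynomial expansion is a polynomial dichotomy with $P_m=0$ for all $m$, so $Q_m=\Id$ and $\Ima Q_1=X$, and Theorem~\ref{t2} applied to the closed subspace $Z=\Ima Q_1=X$ yields immediately that $T_X$ is invertible.

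For the converse I would start from the two hypotheses --- invertibility of $T_X$ and the bound \eqref{pb} --- and invoke Theorem~\ref{t1} with $Z=X$. It produces a polynomial dichotomy whose projections $P_n$ are, following that proof, the ones associated with the splittings $X=X(n)\oplus Z(n)$, where $Z(n)=\cA(n,1)X$. What remains is to check that all these projections vanish, i.e.\ that $Z(n)=X$ for every $n$. For $n=1$ this is automatic, since $Z(1)=\cA(1,1)X=X$ forces $X(1)=\{0\}$ and hence $P_1=0$. To pass from $n$ to $n+1$ I would use \eqref{pro}: from $P_n=0$ one gets $P_{n+1}A_n=A_nP_n=0$, so $\Ima A_n\subset\Ker P_{n+1}=Z(n+1)$; combined with $A_nZ(n)=Z(n+1)$ from \eqref{direct} and the isomorphism $A_n|_{Z(n)}\colon Z(n)\to Z(n+1)$ established in the proof of Theorem~\ref{t1}, this gives $Z(n+1)=A_nX$, which equals $X$ exactly when $A_n$ is onto; since $A_n|_{Z(n)}=A_n$ is already known to be injective, that is precisely the invertibility of $A_n$.

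The hard part will be exactly this last point. Unlike the contraction corollary, where $Z=\{0\}$ gives $Z(n)=\cA(n,1)\{0\}=\{0\}$ and hence $P_n=\Id$ for free, here one must control the images $\cA(n,1)X$, i.e.\ the surjectivity of the maps $A_n$; note that this is inherent to the very notion of a polynomial expansion, since the requirement that $A_n|_{\Ker P_n}$ be invertible becomes, when $P_n=0$, invertibility of $A_n$ itself, so in the setting of the theorem it is the natural thing to establish from the hypotheses. Everything else --- that the projections furnished by Theorem~\ref{t1} satisfy \eqref{pro}, that $\sup_n\|P_n\|<\infty$, and that the polynomial estimates \eqref{d1}--\eqref{d2} hold --- is already contained in the proof of Theorem~\ref{t1} and requires nothing further once $Z=X$ is fixed.
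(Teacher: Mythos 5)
Your route through Theorems~\ref{t2} and~\ref{t1} is the same as the paper's (whose proof is the single sentence that the result ``follows directly from Theorems~\ref{t2} and~\ref{t1}''), and the forward implication is handled correctly. For the converse, however, you have correctly located the difficulty --- Theorem~\ref{t1} with $Z=X$ furnishes projections $P_n$ onto $X(n)$ along $Z(n)=\cA(n,1)X$, and while $Z(1)=X$ forces $P_1=0$, obtaining $P_n=0$ for $n\ge 2$ requires each $\cA(n,1)$ to be surjective --- and you then declare this to be ``the hard part'' and stop. So the proposal does not actually prove the implication; it only reduces it to a statement (surjectivity of every $A_n$) that you do not derive from the hypotheses.

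Moreover, that statement does not follow from the hypotheses as given, so the gap is not merely an omitted routine step. Take $X=\ell^2$, let $A_1$ be the unilateral shift $e_j\mapsto e_{j+1}$, and for $n\ge 2$ let $A_n$ act as $\tfrac{n}{n+1}$ on $\operatorname{span}(e_0)$ and as $\tfrac{n+1}{n}$ on the closed span of $e_1,e_2,\dots$. With $\lVert\cdot\rVert_m=\lVert\cdot\rVert$, $P_1=0$, and $P_n$ the coordinate projection onto $\operatorname{span}(e_0)$ for $n\ge 2$, one checks that this is a polynomial dichotomy with $\lambda=1$, $D=2$, that~\eqref{pb} holds with $M=a=1$, and that $\Ima Q_1=X$, so Theorem~\ref{t2} gives that $T_X$ is invertible; yet $(A_m)$ admits no polynomial expansion, since $A_1$ is not surjective and the definition of a polynomial expansion forces every $A_m$ to be invertible. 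Thus the implication ``$T_X$ invertible and~\eqref{pb} $\Rightarrow$ polynomial expansion'' needs a further hypothesis (for instance that each $A_m$, or each $\cA(n,1)$, is onto), and the inductive argument you sketch cannot be completed without it. This is a defect shared by the paper's own terse proof, but your write-up does flag the problem, which is to your credit; it just does not resolve it.
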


\begin{proof}
The conclusion of the theorem follows directly from Theorems~\ref{t2} and~\ref{t1}.
\end{proof}

We stress that it was proved in~\cite{BDV} that the version of Theorem~\ref{t1} for classical exponential dichotomies holds without an assumption of the type~\eqref{pb}. Therefore, it is natural to ask if the conclusion of Theorem~\ref{t1} is valid in the absence of~\eqref{pb}. 
However, the following example shows that the answer to this question is negative. 
\begin{example}
Let $X=\mathbb R$ with the standard Euclidean norm $\lvert \cdot \rvert$. Furthermore, let $\lVert \cdot \rVert_m=\lvert \cdot \rvert$ for $m\in \N$. We consider the sequence $(A_n)_{n\in \N}$ of operators (which can be identified with numbers) on $X$  given by
\[
A_n=\begin{cases}
n & \text{if $n=2^l$ for some $l\in \N$;}\\
0 & \text{otherwise.}
\end{cases}
\]
Let $\cA(m, n)$ be the corresponding linear cocycle. Note that $\cA(m, n)=0$ whenever $m-n\ge 2$. We claim that for each $\mathbf y=(y_n)_{n\in \N}\in Y_0$, the  sequence $\mathbf x=(x_n)_{n\in \N}$ defined by
\[
x_n=\sum_{k=1}^n \frac 1 k\cA(n, k)y_k,
\]
also belongs to $Y_0$. Indeed, observe that
\[
x_n=\begin{cases}
\frac 1 n y_n+y_{n-1} & \text{if $n=2^l+1$ for some $l\in \N$;}\\
\frac 1 n y_n & \text{otherwise.}
\end{cases}
\]
We conclude that 
\[
\lvert x_n\rvert \le \frac 1 n \lvert y_n\rvert+\lvert y_{n-1}\rvert  \quad \text{for every $n\ge 2$,}
\]
and thus
\[
\lVert x\rVert_\infty \le 2\lVert y\rVert_\infty. 
\]
However, the sequence $(A_n)_{n\in \Z}$ obviously doesn't admit a polynomial contraction since $\sup_{n\in \N}\lVert A_n\rVert=\infty$.
\end{example}

\section{Nonuniform polynomial dichotomies}\label{NPD}
In this section we  recall the notion of a nonuniform exponential dichotomy and establish its connection with the notion of a polynomial dichotomy with respect to a sequence of norms. 

We say that a sequence $(A_m)_{m\in \N}\subset B(X)$ admits a \emph{nonuniform polynomial dichotomy} if:
\begin{itemize}
\item there exist projections $P_m$, $m\in \N$ satisfying~\eqref{pro} and 
such that each map $A_m \rvert_{\Ker P_m} \colon \Ker P_m \to \Ker P_{m+1}$ is invertible;
\item there exist $\lambda, D>0$ and $\epsilon \ge 0$ such that for  $m, n\in \N$ we have 
\begin{equation}\label{nd1}
\lVert \cA(m, n)P_n \rVert \le D(m/n)^{-\lambda}n^\epsilon  \quad \text{for $m\ge n$}
\end{equation}
and 
\begin{equation}\label{nd2}
\lVert \cA(m, n)Q_n \rVert \le  D(n/m)^{-\lambda}n^\epsilon  \quad \text{for $m\le n$,}
\end{equation}
where $Q_n=\Id-P_n$ and
\[
\cA(m, n)=(\cA(n, m)\rvert_{\Ker P_m})^{-1} \colon \Ker P_n \to \Ker P_m 
\]
for $m<n$.
\end{itemize}

\begin{remark}\label{y9}
The results of Barreira and Valls~\cite{BV1}  show that the notion of a nonuniform polynomial dichotomy is quite common. Indeed, assume that $X=\R^d=\R^k \oplus \R^{d-k}$ and that each operator $A_m$ has a block-form
\[
A_m=\begin{pmatrix}
B_m & 0 \\
0 & C_m
\end{pmatrix},
\]
where $B_m \colon \R^k \to \R^k$ and $C_m \colon \R^{d-k} \to \R^{d-k}$ are linear operators. Furthermore, let $\cB(m, n)$ and $\mathcal C(m, n)$ be linear cocycles associated with those two sequences and suppose that
\[
\lim_{n\to \infty} \frac{\log \lVert \cB(n, 1)v\rVert }{\log n}<0 \quad \text{for $v\in \R^k$,}
\]
and 
\[
\lim_{n\to \infty} \frac{\log \lVert \mathcal C(n, 1)v\rVert}{\log n} >0 \quad \text{for $v\in \R^{d-k}\setminus \{0\}$.}
\]
Then, $(A_m)_{m\in \N}$ admits a nonuniform polynomial dichotomy. 
\end{remark}

\begin{proposition}\label{prop1}
The following properties are equivalent:
\begin{enumerate}
\item $(A_m)_{m\in \N}$ admits a nonuniform polynomial dichotomy;
\item $(A_m)_{m\in \N}$ admits a polynomial dichotomy with respect to a sequence of norms $\lVert \cdot \rVert_m$ satisfying
\begin{equation}\label{ln}
\lVert x\rVert \le \lVert x\rVert_m \le Cm^\epsilon \lVert x\rVert \quad x\in X, \ m\in \N
\end{equation}
for some $C>0$ and $\epsilon \ge 0$.
\end{enumerate}
\end{proposition}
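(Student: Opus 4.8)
The plan is to prove the two implications separately. The implication $(2)\Rightarrow(1)$ is essentially immediate, while $(1)\Rightarrow(2)$ requires building a sequence of Lyapunov-type norms that absorbs the nonuniformity; I expect all the real content to be in that construction, though even there the difficulty is bookkeeping rather than anything substantive.

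First I would treat $(2)\Rightarrow(1)$. Suppose $(A_m)_{m\in\N}$ admits a polynomial dichotomy with respect to norms $\lVert\cdot\rVert_m$ satisfying~\eqref{ln}, with projections $P_m$ and constants $\lambda,D>0$. The projections already satisfy~\eqref{pro} with each $A_m\rvert_{\Ker P_m}$ invertible, so these structural requirements transfer verbatim. For the norm bounds I would chain the left inequality in~\eqref{ln} at index $m$, then~\eqref{d1}, then the right inequality in~\eqref{ln} at index $n$:
\[
\lVert\cA(m,n)P_nx\rVert\le\lVert\cA(m,n)P_nx\rVert_m\le D(m/n)^{-\lambda}\lVert x\rVert_n\le CD(m/n)^{-\lambda}n^{\epsilon}\lVert x\rVert,
\]
for all $x\in X$ and $m\ge n$, which is~\eqref{nd1} with constant $CD$; the estimate~\eqref{nd2} follows in the same way from~\eqref{d2}. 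Hence $(A_m)_{m\in\N}$ admits a nonuniform polynomial dichotomy.

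For $(1)\Rightarrow(2)$, assume a nonuniform polynomial dichotomy with projections $P_m$, $Q_m=\Id-P_m$, and constants $\lambda,D>0$, $\epsilon\ge0$. For $x\in X$ and $n\in\N$ I would set
\[
\lVert x\rVert_n^s:=\sup_{k\ge n}\Big(\lVert\cA(k,n)P_nx\rVert\,(k/n)^{\lambda}\Big),\qquad\lVert x\rVert_n^u:=\sup_{1\le k\le n}\Big(\lVert\cA(k,n)Q_nx\rVert\,(n/k)^{\lambda}\Big),
\]
and $\lVert x\rVert_n:=\lVert x\rVert_n^s+\lVert x\rVert_n^u$, where for $k<n$ the operator $\cA(k,n)$ is the inverse cocycle on $\Ker P_n$. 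The first supremum is finite because~\eqref{nd1} bounds each of its terms by $Dn^{\epsilon}\lVert x\rVert$, and the second is a supremum over a finite set; both expressions are plainly seminorms. Taking $k=n$ in each supremum and using $\cA(n,n)=\Id$ gives $\lVert x\rVert_n\ge\lVert P_nx\rVert+\lVert Q_nx\rVert\ge\lVert x\rVert$, so each $\lVert\cdot\rVert_n$ is in fact a norm and the left inequality in~\eqref{ln} holds; the right inequality follows since~\eqref{nd1} gives $\lVert x\rVert_n^s\le Dn^{\epsilon}\lVert x\rVert$ and~\eqref{nd2} gives $\lVert x\rVert_n^u\le Dn^{\epsilon}\lVert x\rVert$, so~\eqref{ln} holds with $C=2D$. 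In particular each $\lVert\cdot\rVert_m$ is equivalent to $\lVert\cdot\rVert$.

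It remains to check~\eqref{d1} and~\eqref{d2} for these norms (the projections are unchanged, so~\eqref{pro} and the invertibility of $A_m\rvert_{\Ker P_m}$ persist). The key observations are that the $s$- and $u$-components of $\lVert\cdot\rVert_n$ depend only on $P_nx$ and $Q_nx$ respectively, and that $\cA(m,n)P_nx\in\Ima P_m$ for $m\ge n$ while $\cA(m,n)Q_nx\in\Ker P_m$ for all $m,n$. Thus for $m\ge n$ and $x\in X$ the $u$-component of $\lVert\cA(m,n)P_nx\rVert_m$ vanishes, and using $\cA(k,m)\cA(m,n)P_n=\cA(k,n)P_n$ together with $(k/m)^{\lambda}=(k/n)^{\lambda}(n/m)^{\lambda}$,
\[
\lVert\cA(m,n)P_nx\rVert_m=\sup_{k\ge m}\Big(\lVert\cA(k,n)P_nx\rVert\,(k/m)^{\lambda}\Big)=(m/n)^{-\lambda}\sup_{k\ge m}\Big(\lVert\cA(k,n)P_nx\rVert\,(k/n)^{\lambda}\Big)\le(m/n)^{-\lambda}\lVert x\rVert_n,
\]
which is~\eqref{d1} with $D=1$; symmetrically, for $m\le n$ the $s$-component of $\lVert\cA(m,n)Q_nx\rVert_m$ vanishes, and $\cA(k,m)\cA(m,n)Q_n=\cA(k,n)Q_n$ for $1\le k\le m$ together with $(m/k)^{\lambda}=(m/n)^{\lambda}(n/k)^{\lambda}$ give $\lVert\cA(m,n)Q_nx\rVert_m\le(m/n)^{\lambda}\lVert x\rVert_n^u\le(n/m)^{-\lambda}\lVert x\rVert_n$, which is~\eqref{d2}. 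This completes the construction. The only real friction in carrying this out is the bookkeeping — tracking the index ranges of the two suprema, the inverse cocycle on $\Ker P_n$, and the decoupling of the $s$- and $u$-parts along the invariant subspaces — rather than any genuine obstacle: the point of the construction is precisely that in the new norms the dichotomy bounds become sharp (constant $1$) and all of the nonuniformity is pushed into the factor $Cm^{\epsilon}$ in~\eqref{ln}.
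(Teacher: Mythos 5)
Your proposal is correct and follows essentially the same route as the paper: the same adapted norm $\lVert x\rVert_n=\lVert x\rVert_n^s+\lVert x\rVert_n^u$, the same verification of~\eqref{ln} with $C=2D$, and the same change-of-index computation yielding~\eqref{d1} and~\eqref{d2} with $D=1$. If anything you are slightly more careful than the paper in a couple of places: you make explicit that the $u$-component of $\lVert\cA(m,n)P_nx\rVert_m$ and the $s$-component of $\lVert\cA(m,n)Q_nx\rVert_m$ vanish by invariance of the ranges of $P_m$ and $Q_m$ (the paper uses this silently), and you correctly write the final step as an inequality $\sup_{k\ge m}(\cdots)\le\lVert x\rVert_n$ where the paper writes an equality that should in fact be $\le$ (since the supremum is only the $s$-part of $\lVert x\rVert_n$). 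These are cosmetic differences; the argument is the same.
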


\begin{proof}
Assume first that the sequence $(A_m)_{m\in \N}$ admits a nonuniform polynomial dichotomy. For each $n\in \N$ and $x\in X$, let
\[
\lVert x\rVert_n:=\sup_{m\ge n} (\lVert \cA(m, n)P_n x\rVert (m/n)^\lambda)+\sup_{m\le n}(\lVert \cA(m, n)Q_n x\rVert (n/m)^\lambda).
\]
It follows readily from~\eqref{nd1} and~\eqref{nd2} that~\eqref{ln} holds with $C=2D$. Furthermore, for $m\ge n$ and $x\in X$ we have that 
\[
\begin{split}
\lVert \cA(m, n)P_n x\rVert_m &=\sup_{k\ge m} (\lVert \cA(k, m)\cA(m, n)P_n x\rVert  (k/m)^\lambda) \\
&\le \sup_{k\ge n} (\lVert \cA(k, n)P_n x\rVert  (k/m)^\lambda) \\
&=(m/n)^{-\lambda}\sup_{k\ge n} (\lVert \cA(k, n)P_n x\rVert  (k/n)^\lambda) \\
&=(m/n)^{-\lambda} \lVert x\rVert_n,
\end{split}
\]
and thus~\eqref{d1} holds. Similarly, one can show that~\eqref{d2} holds. Therefore, $(A_m)_{m\in \N}$ admits a polynomial dichotomy with respect to the sequence of norms $\lVert \cdot \rVert_m$.

Conversely, suppose that $(A_m)_{m\in \N}$ admits a polynomial dichotomy with respect to a sequence of norms $\lVert \cdot \rVert_m$ satisfying~\eqref{ln} for some $C>0$ and $\epsilon \ge 0$. It follows that~\eqref{d1} and~\eqref{ln} that
\[
\begin{split}
\lVert \cA(m, n)P_n x\rVert  &\le \lVert \cA(m, n)P_n x\rVert_m  \\
& \le D(m/n)^{-\lambda}\lVert x\rVert_n \\
&\le CD(m/n)^{-\lambda}n^\epsilon \lVert x\rVert,
\end{split}
\]
for $m\ge n$ and $x\in X$. Therefore, \eqref{nd1} holds. Similarly, one can establish~\eqref{nd2} and therefore $(A_m)_{m\in \N}$ admits a nonuniform polynomial dichotomy.
\end{proof}
However, we will not be able to apply our main results for general nonuniform polynomial  behaviour due to the fact that the norms $\lVert \cdot \rVert_m$ constructed in the proof of Proposition~\ref{prop1} can fail to satisfy~\eqref{pb}. 
Therefore, we will consider a stronger notion of a nonuniform polynomial dichotomy. 

We say that $(A_m)_{m\in \Z}$ admits a \emph{strong nonuniform polynomial dichtotomy} if it admits a nonuniform polynomial dichotomy and there exist $K, b>0$ and $\epsilon \ge 0$ such that
\begin{equation}\label{npg}
\lVert \cA(m, n)\rVert \le K (m/n)^b n^\epsilon \quad \text{for $m\ge n$.}
\end{equation}
Observe that it is always possible to achieve that~\eqref{nd1}, \eqref{nd2} and~\eqref{npg} hold with the same $\epsilon$. 
\begin{proposition}\label{12}
The following properties are equivalent:
\begin{enumerate}
\item $(A_m)_{m\in \N}$ admits a strong  nonuniform polynomial dichotomy;
\item $(A_m)_{m\in \N}$ admits a polynomial dichotomy with respect to a sequence of norms $\lVert \cdot \rVert_m$ satisfying~\eqref{pb} and~\eqref{ln}
for some $C, M, a>0$ and $\epsilon \ge 0$.
\end{enumerate}
\end{proposition}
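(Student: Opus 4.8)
The plan is to prove the two implications separately. The implication $(2)\Rightarrow(1)$ is the routine one: if $(A_m)_{m\in\N}$ admits a polynomial dichotomy with respect to norms $\lVert\cdot\rVert_m$ satisfying~\eqref{pb} and~\eqref{ln}, then Proposition~\ref{prop1} already yields a nonuniform polynomial dichotomy, and combining~\eqref{pb} with~\eqref{ln} gives $\lVert\cA(m,n)x\rVert\le\lVert\cA(m,n)x\rVert_m\le M(m/n)^a\lVert x\rVert_n\le CM(m/n)^a n^\epsilon\lVert x\rVert$ for $m\ge n$, which is~\eqref{npg} with $K=CM$ and $b=a$; hence the dichotomy is strong. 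So the substance of the proof is $(1)\Rightarrow(2)$.

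For $(1)\Rightarrow(2)$ I would first normalise: since $(A_m)_{m\in\N}$ admits a strong nonuniform polynomial dichotomy we may assume (as already remarked in the text) that~\eqref{nd1}, \eqref{nd2} and~\eqref{npg} hold with a common $\epsilon\ge0$, and, enlarging $b$ if necessary, that $b\ge\lambda$. Then I would define, for $n\in\N$ and $x\in X$,
\[
\lVert x\rVert_n:=\sup_{m\ge n}\big(\lVert\cA(m,n)P_nx\rVert (m/n)^\lambda\big)+\sup_{m\le n}\big(\lVert\cA(m,n)Q_nx\rVert (n/m)^\lambda\big)+\sup_{m\ge n}\big(\lVert\cA(m,n)x\rVert (m/n)^{-b}\big).
\]
The first two summands are the usual Lyapunov-type norms from the proof of Proposition~\ref{prop1}, tailored to~\eqref{d1} and~\eqref{d2}; the third summand is the new ingredient, recording the at-most-polynomial forward growth of \emph{all} vectors, and it is exactly what makes~\eqref{pb} hold while keeping the whole construction inside the nonuniform polynomial class.

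The verification then proceeds through four claims. \emph{First}, $\lVert\cdot\rVert_n$ is a norm equivalent to $\lVert\cdot\rVert$ obeying~\eqref{ln}: the lower bound $\lVert x\rVert\le\lVert x\rVert_n$ is the $m=n$ term of the third summand, and the upper bound $\lVert x\rVert_n\le(2D+K)n^\epsilon\lVert x\rVert$ follows termwise from~\eqref{nd1}, \eqref{nd2}, \eqref{npg}. \emph{Second}, \eqref{d1}: for $m\ge n$ the vector $\cA(m,n)P_nx$ lies in $\Ima P_m$, so in $\lVert\cA(m,n)P_nx\rVert_m$ only the first and third summands survive, and using the cocycle identity $\cA(k,m)\cA(m,n)P_n=\cA(k,n)P_n$ together with the elementary optimisation of $k\mapsto n^\lambda m^b k^{-(\lambda+b)}$ each is bounded by $(m/n)^{-\lambda}\lVert x\rVert_n$. \emph{Third}, \eqref{d2}: for $m\le n$ the vector $\cA(m,n)Q_nx$ lies in $\Ima Q_m$; one estimates the two surviving summands, the only delicate piece being the part of the third supremum with $k\ge n$, where $\cA(k,n)Q_nx$ is a \emph{forward} iterate of an unstable vector and is controlled by writing $\cA(k,n)Q_nx=\cA(k,n)x-\cA(k,n)P_nx$ and invoking the third and first summands of $\lVert x\rVert_n$ (giving $\lVert\cA(k,n)Q_nx\rVert\le2(k/n)^b\lVert x\rVert_n$), after which $(m/n)^b\le(m/n)^\lambda$ for $m\le n$ closes the estimate. \emph{Fourth}, \eqref{pb}: the same $P_n/Q_n$ split, plus the inequality $(k/n)^b(m/k)^\lambda\le(m/n)^b$ for $n\le k\le m$ (valid since $b\ge\lambda$), yields $\lVert\cA(m,n)x\rVert_m\le4(m/n)^b\lVert x\rVert_n$. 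Since the projections $P_m$ and the invertibility of $A_m\rvert_{\Ker P_m}$ are inherited unchanged from the nonuniform dichotomy, this gives statement $(2)$.

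I expect the main obstacle to be the bookkeeping inside the third and fourth claims: one must consistently track which branch of $\cA(k,n)$ is in play (forward product versus inverse branch, according to whether $k\gtrless n$), and then show that the three summands of $\lVert x\rVert_n$ together dominate $\lVert\cA(k,n)Q_nx\rVert$ uniformly over \emph{all} $k$ — this is precisely the point at which the plain norm of Proposition~\ref{prop1} fails~\eqref{pb} and the extra summand rescues it. The accompanying power-function optimisations are elementary but must be carried out with the correct monotonicity, which switches sign with $b-\lambda$ and with the position of $m$ relative to $n$.
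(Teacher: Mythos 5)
Your proof is correct and follows essentially the same route as the paper: both directions are handled as you describe, and the forward implication is verified by introducing a three-term Lyapunov-type norm whose summands are checked one by one against~\eqref{ln}, \eqref{d1}, \eqref{d2} and~\eqref{pb}. The only cosmetic difference is in the third summand, where the paper uses $\sup_{m>n}\bigl(\lVert\cA(m,n)Q_n x\rVert(m/n)^{-b}\bigr)$ rather than your $\sup_{m\ge n}\bigl(\lVert\cA(m,n)x\rVert(m/n)^{-b}\bigr)$; both choices work with nearly identical bookkeeping (yours makes the lower bound in~\eqref{ln} immediate, at the cost of the $P_n/Q_n$ split you already flag when bounding forward iterates of $Q_n x$).
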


\begin{proof}
Assume that  $(A_m)_{m\in \N}$ admits a strong  nonuniform polynomial dichotomy. Without any loss of generality, we can suppose that $\lambda \le b$. Observe that it follows from~\eqref{nd2} (applied for $m=n$) and~\eqref{npg} that 
\begin{equation}\label{8:10}
\lVert \cA(m, n)Q_n x\rVert \le KD (m/n)^b n^{2\epsilon}\lVert x\rVert  \quad \text{for $m\ge n$ and $x\in X$.}
\end{equation}
 For $x\in X$ and $n\in \N$, set 
\[
\lVert x\rVert_n =\lVert x\rVert_n^s + \lVert x\rVert_n^u, 
\]
where
\[
\lVert x\rVert_n^s:=\sup_{m\ge n} (\lVert \cA(m, n)P_n x\rVert (m/n)^\lambda)
\]
and
\[
\lVert x\rVert_n^u:=\sup_{m\le  n}(\lVert \cA(m, n)Q_n x\rVert (n/m)^\lambda)+\sup_{m> n}(\lVert \cA(m, n)Q_n x\rVert (m/n)^{-b}).
\]
It follows readily from~\eqref{nd1}, \eqref{nd2} and~\eqref{8:10} that~\eqref{ln} for $C=(2+K)D$ and with $2\epsilon$ instead of $\epsilon$. 

On the other hand, for $m\ge n$ and $x\in X$ we have that
\[
\begin{split}
\lVert \cA(m, n)P_n x\rVert_m &=\lVert \cA(m,n)P_nx\rVert_m^s  \\
&=\sup_{k\ge m}(\lVert \cA(k,m)\cA(m,n)P_nx\rVert (k/m)^\lambda) \\
&=(m/n)^{-\lambda}\sup_{k\ge m}(\lVert \cA(k,n)P_nx\rVert (k/n)^\lambda) \\
&\le (m/n)^{-\lambda}\lVert x\rVert_n.
\end{split}
\]
Therefore, 
\begin{equation}\label{8:15}
\lVert \cA(m, n)P_n x\rVert_m \le (m/n)^{-\lambda}\lVert x\rVert_n \quad \text{for $m\ge n$ and $x\in X$.}
\end{equation}
Furthermore, for $m\le n$ and $x\in X$ we have that (using that $\lambda \le b$)
\[
\begin{split}
\lVert \cA(m, n)Q_nx\rVert_m &=\lVert \cA(m, n)Q_n x\rVert_m^u \\
&=\sup_{k\le m}(\lVert \cA(k,n)Q_n x\rVert (m/k)^\lambda)\\
&\phantom{=}+\sup_{k>m}(\lVert \cA(k, n)Q_n x\rVert (k/m)^{-b}) \\
&\le \sup_{k\le m}(\lVert \cA(k,n)Q_n x\rVert (m/k)^\lambda)\\
&\phantom{\le}+\sup_{m<k\le n}(\lVert \cA(k, n)Q_n x\rVert (k/m)^{-\lambda}) \\
&\phantom{\le}+\sup_{n<k}(\lVert \cA(k, n)Q_n x\rVert (k/m)^{-b}) \\
&\le (n/m)^{-\lambda}\sup_{k\le n}(\lVert \cA(k,n)Q_n x\rVert (n/k)^\lambda)\\
&\phantom{\le}+(n/m)^{-\lambda} \sup_{k\le n}(\lVert \cA(k, n)Q_n x\rVert (k/n)^{-\lambda}) \\
&\phantom{\le}+(n/m)^{-b}\sup_{k>n}(\lVert \cA(k, n)Q_n x\rVert (k/n)^{-b}) \\
&\le  2(n/m)^{-\lambda} \lVert x\rVert_n^u,
\end{split}
\]
and thus
\begin{equation}\label{8:16}
\lVert \cA(m, n)Q_nx\rVert_m \le 2 (n/m)^{-\lambda} \lVert x\rVert_n \quad \text{for $m\le n$ and $x\in X$.}
\end{equation}
Similarly,  one can show that 
\begin{equation}\label{8:17}
\lVert \cA(m, n)Q_nx\rVert_m \le 2(m/n)^b \lVert x\rVert_n \quad \text{for $m\ge n$ and $x\in X$.}
\end{equation}
We conclude that~\eqref{8:15} and~\eqref{8:16} imply that $(A_m)_{m\in \N}$ admits a polynomial dichotomy with respect to the sequence of norms $\lVert \cdot \rVert_m$. Furthermore, \eqref{8:15} and~\eqref{8:17} imply that~\eqref{pb} holds. 

The converse statement is straighforward to prove. 
\end{proof}

\section{Robustness of strong nonuniform polynomial dichotomy}\label{R}

\begin{theorem}\label{qe}
Let $(A_m)_{m\in \N}$ and $(B_m)_{m\in \N}$ be two sequences in $B(X)$ such that:
\begin{enumerate}
\item $(A_m)_{m\in \Z}$ admits a strong nonuniform polynomial dichotomy and let $\epsilon \ge 0$ be as in the definition of the notion of a strong nonuniform polynomial dichotomy;
\item there exist $c>0$ such that
\begin{equation}\label{robustness}
\lVert A_m-B_m \rVert \le \frac{c}{(m+1)^{2+\epsilon}} \quad \text{for $m\in \Z$.}
\end{equation}
\end{enumerate}
If $c$ is sufficiently small, then $(B_m)_{m\in \N}$ also admits a strong nonuniform polynomial dichotomy. 
\end{theorem}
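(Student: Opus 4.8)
The plan is to route the whole argument through the admissibility characterization of Sections~\ref{MR} and~\ref{NPD}. By Proposition~\ref{12} applied to $(A_m)_{m\in\N}$, there is a sequence of norms $\lVert\cdot\rVert_m$ with respect to which $(A_m)_{m\in\N}$ admits a polynomial dichotomy and which satisfies both~\eqref{pb} and~\eqref{ln} (the exponent in~\eqref{ln} being, after absorbing constants, the $\epsilon$ appearing in~\eqref{robustness}). By Theorem~\ref{t2}, for the closed subspace $Z=\Ima Q_1$ the operator built from $(A_m)_{m\in\N}$ as in Section~\ref{P} -- call it $T_Z^A$ -- is invertible. I would keep this same $Z$ and these same norms throughout, and write $T_Z^B$ for the operator defined in exactly the same way but with $B_m$ in place of $A_m$. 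The target is then to show that $T_Z^B$ is invertible and that~\eqref{pb} still holds for $(B_m)_{m\in\N}$ with respect to $\lVert\cdot\rVert_m$; once this is done, Theorem~\ref{t1} gives that $(B_m)_{m\in\N}$ admits a polynomial dichotomy with respect to $\lVert\cdot\rVert_m$, and Proposition~\ref{12} upgrades this to a strong nonuniform polynomial dichotomy, which is the claim.

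\emph{Invertibility of $T_Z^B$.} The componentwise difference is $((T_Z^A-T_Z^B)\mathbf x)_{m+1}=(m+1)(B_m-A_m)x_m$, while $((T_Z^A-T_Z^B)\mathbf x)_1=0$. Using~\eqref{ln} to pass between $\lVert\cdot\rVert$ and $\lVert\cdot\rVert_{m+1}$, then~\eqref{robustness}, and finally $\lVert x_m\rVert\le\lVert x_m\rVert_m\le\lVert\mathbf x\rVert_\infty$, one sees that $\mathbf x\mapsto(T_Z^A-T_Z^B)\mathbf x$ is a bounded operator $S\colon(Y_Z,\lVert\cdot\rVert_\infty)\to Y_0$ with $\lVert S\rVert\le\mathrm{const}\cdot c$. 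In particular $\mathcal D(T_Z^A)=\mathcal D(T_Z^B)$, the graph norms $\lVert\cdot\rVert_{T_Z^A}$ and $\lVert\cdot\rVert_{T_Z^B}$ are equivalent, and $T_Z^B=T_Z^A(\Id-(T_Z^A)^{-1}S)$ is invertible as soon as $c\,\lVert(T_Z^A)^{-1}\rVert\cdot\mathrm{const}<1$; this is precisely the smallness requirement on $c$.

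\emph{Polynomial growth for $(B_m)_{m\in\N}$.} This is the delicate step: the one-step closeness~\eqref{robustness} does \emph{not} make the cocycles $\cA^A$ and $\cA^B$ close, and naive iteration of one-step bounds only yields exponential growth, so~\eqref{pb} for $(B_m)_{m\in\N}$ must be recovered by a discrete variation-of-constants argument. Starting from the identity
\[
\cA^B(m,n)=\cA^A(m,n)+\sum_{k=n}^{m-1}\cA^A(m,k+1)(B_k-A_k)\cA^B(k,n),\qquad m>n,
\]
(proved by induction on $m-n$), fix $x$ with $\lVert x\rVert_n=1$ and set $\phi(k)=\lVert\cA^B(k,n)x\rVert_k$. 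Combining this identity with~\eqref{pb} for $A$, \eqref{ln} and~\eqref{robustness} gives
\[
\phi(m)\le M\Bigl(\frac m n\Bigr)^a+M\,m^a\sum_{k=n}^{m-1}\frac{\mathrm{const}\cdot c}{(k+1)^{a+2}}\,\phi(k).
\]
Putting $\psi(k)=\phi(k)/k^a$ and applying a discrete Gronwall inequality -- the decisive point being that $\sum_k(k+1)^{-2}<\infty$, which neutralizes the prefactor $m^a$ -- one obtains $\psi(m)\le M n^{-a}\exp(\mathrm{const}\cdot c)$, that is, $\lVert\cA^B(m,n)x\rVert_m\le M'(m/n)^a\lVert x\rVert_n$ with a new constant $M'$. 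Hence~\eqref{pb} holds for $(B_m)_{m\in\N}$ with the same exponent $a$.

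With both ingredients in hand, Theorem~\ref{t1} applies to $(B_m)_{m\in\N}$, the subspace $Z$, and the norms $\lVert\cdot\rVert_m$, so $(B_m)_{m\in\N}$ admits a polynomial dichotomy with respect to $\lVert\cdot\rVert_m$; since these norms satisfy~\eqref{pb} and~\eqref{ln}, Proposition~\ref{12} then yields that $(B_m)_{m\in\N}$ admits a strong nonuniform polynomial dichotomy. The main obstacle is the polynomial-growth step: the perturbation of $T_Z$ is a routine Neumann-series argument once one observes that the two operators share the same domain with equivalent graph norms, whereas recovering~\eqref{pb} for the perturbed system genuinely relies on the variation-of-constants formula together with the summability of $\sum(k+1)^{-2}$ -- and this is exactly why the weight $(m+1)^{-2-\epsilon}$, rather than a milder one, is imposed in~\eqref{robustness}.
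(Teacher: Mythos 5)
Your proposal follows essentially the same route as the paper: pass to norms $\lVert\cdot\rVert_m$ satisfying~\eqref{pb} and~\eqref{ln} via Proposition~\ref{12}, show via~\eqref{ln} and~\eqref{robustness} that the difference $T_Z-\tilde T_Z$ has operator norm $O(c)$ so that a Neumann series gives invertibility of $\tilde T_Z$ for small $c$, recover~\eqref{pb} for $(B_m)_{m\in\N}$ by the variation-of-constants identity together with a discrete Gronwall estimate, and then conclude through Theorem~\ref{t1} and Proposition~\ref{12}. The only minor divergence is in the Gronwall step: you retain the sharper weight $(k+1)^{-2}$ and exploit $\sum_k(k+1)^{-2}<\infty$ to keep the exponent $a$ unchanged, whereas the paper discards one factor, works with $(k+1)^{-1}$, and obtains the exponent $a+MCc$; both produce the polynomial bound needed for Proposition~\ref{12}, so your version is a perfectly good (and marginally tighter) variant of the same argument.
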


\begin{proof}
Since $(A_m)_{m\in \N}$ admits a strong nonuniform polynomial dichotomy, there exists a sequence of norms $\lVert \cdot \rVert_m$ satisfying conclusions of Proposition~\ref{12}. Furthermore, it follows from Theorem~\ref{t2} that there exists a closed subspace $Z\subset X$ such that 
$T_Z \colon \mathcal D(T_Z) \to Y_0$ is an invertible operator. 

Let us consider an operator $\tilde T_Z \colon \mathcal D(T_Z) \to Y_0$ defined  by 
\[
(\tilde T_Z \mathbf x)_1=0 \quad \text{and} \quad (\tilde T_Z \mathbf x)_{m+1}=(m+1)(x_{m+1}-B_m x_m) \quad  \text{for $m\in \N$.}
\]
It follows from~\eqref{ln} and~\eqref{robustness} that 
\begin{align}\label{155}
\begin{split}
\lVert ((T_Z-\tilde T_Z)\mathbf x)_{m+1}\rVert_{m+1} &=(m+1)\lVert (A_m-B_m)x_m\rVert_{m+1} \\
&\le C(m+1)^{1+\epsilon} \lVert (A_m-B_m)x_m\rVert \\
&\le C(m+1)^{1+\epsilon} \frac{c}{(m+1)^{2+\epsilon}} \lVert x_m\rVert  \\
&=\frac{cC}{m+1} \lVert x_m\rVert \\
&\le cC\lVert x_m\rVert_m \\
&\le cC\lVert \mathbf x\rVert_\infty \\
&\le cC\lVert \mathbf x\rVert_{T_Z},
\end{split}
\end{align}
for each $m\in \N$ and $\mathbf x=(x_m)_{m\in \N}\in \mathcal D(T_Z)$.  Therefore,
\begin{equation}\label{17}
\lVert T_Z-\tilde T_Z\rVert \le cC.
\end{equation}
It follows from~\eqref{17} together with the invertibility of $T_Z$ that for $c$ sufficiently small, $\tilde T_Z$ is also an invertible operator. Hence, Theorem~\ref{t2} implies that $(B_m)_{m\in \N}$ admits a polynomial dichotomy with respect to the sequence of norms $\lVert \cdot \rVert_m$. 
Moreover, note that in~\eqref{155} we proved that 
\begin{equation}\label{66}
\lVert (A_m-B_m)x\rVert_{m+1} \le \frac{cC}{m+1}\lVert x\rVert_m \quad \text{for $m\in \N$ and $x\in X$.}
\end{equation}
Let $\cB(m, n)$ denote the linear cocycle associated with the sequence $(B_m)_{m\in \Z}$.   Furthermore, let $(y_m)_{m\in \N}\subset X$ be the sequence such that $y_{m+1}=B_my_m$ for each $m$. 
Observe that
\[
y_m=\cA(m, n)y_n+\sum_{j=n}^{m-1}\cA(m, j+1)(B_j-A_j)y_j.
\]
It follows from~\eqref{pb} and~\eqref{66} that 
\[
\lVert y_m\rVert_m \le M(m/n)^a \lVert y_n\rVert_n+McC\sum_{j=n}^{m-1} (m/j+1)^a (j+1)^{-1}\lVert y_j\rVert_j,
\]
and thus
\[
(m/n)^{-a}\lVert y_m\rVert_m \le M\lVert y_n\rVert_n+McC\sum_{j=n}^{m-1} (n/j+1)^a (j+1)^{-1}\lVert y_j\rVert_j.
\]
Using induction, it is easy to verify that 
\[
(m/n)^{-a}\lVert y_m\rVert_m \le M\lVert y_n\rVert_n \prod_{j=n}^{m-1} (1+MCc(j+1)^{-1}),
\]
and thus
\[
\begin{split}
\lVert y_m\rVert_m  
&\le M(m/n)^a \lVert y_n\rVert_n \exp \bigg{(}\sum_{j=n}^{m-1}MCc(j+1)^{-1}\bigg{)}\\
&\le M(m/n)^a \lVert y_n\rVert_n \exp \bigg{(} MCc(1+\log (m/n))\bigg{)}\\
&=Me^{MCc} (m/n)^{a+MCc}\lVert y_n\rVert_n.
\end{split}
\]
Therefore, 
\begin{equation}\label{h}
\lVert \cB(m,n)x\rVert_m \le Me^{MCc} (m/n)^{a+MCc} \lVert x\rVert_n \quad \text{for $m\ge n$ and $x\in X$.}
\end{equation}
It follows from Proposition~\ref{12} and~\eqref{h} that $(B_m)_{m\in \N}$ admits a strong nonuniform polynomial dichotomy. 
\end{proof}

\begin{remark}
A careful analysis of the proof of Theorem~\ref{qe} (see~\eqref{155}) shows that one can deduce that $(B_m)_{m\in \N}$ admits a nonuniform polynomial dichotomy under weaker assumption than~\eqref{robustness}. Indeed, it is sufficient to assume that 
\[
\lVert A_m-B_m \rVert \le \frac{c}{(m+1)^{1+\epsilon}} \quad \text{for $m\in \Z$,}
\]
with $c>0$ sufficiently small. We conclude that the more restrictive condition~\eqref{robustness} was imposed to deduce that  $(B_m)_{m\in \N}$ admits a \emph{strong} nonuniform polynomial dichotomy.
\end{remark}
We conclude by noting that our main results can be used to established parametrized robustness of nonuniform polynomial dichotomies. More precisely, one can easily obtain versions of Theorems 3 and 4 from~\cite{BDV}  in the present context. We refrein from doing so explicitly since this would require copying arguments from~\cite{BDV}.

\section{Acknowledgement}
I would like to thank anonymous referees for careful reading and for constructive comments that helped me improve the paper. 
\bibliographystyle{amsplain}

\begin{thebibliography}{11}
\bibitem{AVM} B. Aulbach and N. Van Minh, \emph{The concept of spectral dichotomy for linear difference equations II}, J. Difference Equ. Appl. \textbf{2} (1996), 251--262.
\bibitem{BDV1} L. Barreira, D. Dragi\v cevi\' c and C. Valls, \emph{Nonuniform hyperbolicity and admissibility}, Adv. Nonlinear Stud. \textbf{14} (2014), 791--811.
\bibitem{BDV} L. Barreira, D. Dragi\v cevi\' c and C. Valls, \emph{Nonuniform hyperbolicity and one-sided admissibility}, Rend. Lincei Mat. Appl. \textbf{27} (2016), 235--247. 
\bibitem{BDV4}L. Barreira, D. Dragi\v cevi\' c and C. Valls, \emph{Characterization of nonuniform contractions and expansions with growth rates}, Mediterr. J. Math. \textbf{13} (2016), 4265--4279.

\bibitem{BDV3} L. Barreira, D. Dragi\v cevi\' c and C. Valls, \emph{Admissibility on the half line for evolution families}, J. Anal. Math. \textbf{132} (2017), 157--176.
\bibitem{BDV2} L. Barreira, D. Dragi\v cevi\' c and C. Valls, \emph{Admissibility and hyperbolicity}, Springer Briefs in Mathematics  (2018), Springer.
\bibitem{BV} L. Barreira and C. Valls, \emph{Growth rates and nonuniform hyperbolicity}, Discrete Contin. Dynam. Syst.  \textbf{22} (2008), 509--528.
\bibitem{BV1} L. Barreira and C. Valls, \emph{Polynomial growth rates}, Nonlinear Anal. \textbf{71} (2009), 5208--5219.
\bibitem{BV2} L. Barreira, M. Fan, C. Valls, and J. Zhang, \emph{ Robustness of nonuniform polynomial dichotomies for difference equations}, Topol. Methods Nonlinear Anal. \textbf{37}, 357--376.
\bibitem{BS1} A.  Bento and C. Silva, \emph{Stable manifolds for nonuniform polynomial dichotomies},
J. Funct. Anal. \textbf{257} (2009), 122--148.
\bibitem{BS2} A. Bento and C. Silva, \emph{Stable manifolds for nonautonomous equations with nonuniform polynomial dichotomies}, Q. J. Math \textbf{63} (2012), 275--308. 
\bibitem{CL} C. Chicone and Yu. Latushkin, \emph{Evolution Semigroups in Dynamical Systems and Differential Equations}, Mathematical Surveys and Monographs 70, Amer. Math. Soc., 1999.
\bibitem{CS} C. Chicone and R. Swanson, \emph{Spectral theory for  linearizations of  dynamical svstems}, J. Differential Equations \textbf{40} (1981), 155--167.
\bibitem{Leiva} S. N. Chow and H. Leiva, \emph{Existence and Roughness of the Exponential Dichotomy for Skew-Product Semiflow in Banach Spaces}, J. Differential Equations \textbf{120} (1995), 429--477.
\bibitem{Co} W. Coppel, {Dichotomies in Stability Theory}, Lect. Notes in Math. 629, Springer, 1978.
\bibitem{DK} Ju. Dalec$'$ki\u{\i} and M. Kre\u\i{n}, \emph{Stability of Solutions of Differential Equations in Banach Space}, Translations of Mathematical Monographs 43, Amer. Math. Soc., 1974.
\bibitem{Hai} P. V. Hai, \emph{On the polynomial stability of evolution families}, Appl. Anal. \textbf{95} (2016), 1239--1255.
\bibitem{He} D. Henry, \emph{Geometric Theory of Semilinear Parabolic Equations}, Lecture Notes in Mathematics 840, Springer-Verlag, Berlin-New York, 1981.
\bibitem{HH} N. Huy, \emph{Exponential dichotomy of evolution equations and admissibility of function spaces on a half-line}, J. Funct. Anal. \textbf{235} (2006), 330--354.

\bibitem{HVM} N. Huy and N. Van Minh, \emph{Exponential dichotomy of difference equations and applications to evolution equations on the half-line}, Comput. Math. Appl. \textbf{42} (2001), 301--311.
\bibitem{LRS} Y. Latushkin, T. Randolph and R. Schnaubelt, \emph{Exponential dichotomy and mild solution of nonautonomous equations in Banach spaces}, J. Dynam. Differential
Equations \textbf{10} (1998), 489--510. 
\bibitem{LS} Y. Latushkin and R. Schnaubelt, \emph{volution semigroups, translation algebra and exponential dichotomy of cocycles}, J. Differential Equations \textbf{159} (1999), 321--369.
\bibitem{tali} T. Li, \emph{Die Stabilit\"atsfrage bei Differenzengleichungen}, Acta Math. \textbf{63} (1934), 99--141.
\bibitem{LP} N. Lupa and L. Popescu, \emph{A complete characterization of exponential stability for discrete dynamics}, J. Difference Equ. Appl.  \textbf{23} (2017), 2072--2092.
\bibitem{MS1} J. Massera and J. Sch\"affer, \emph{Linear differential equations and functional analysis. I}, Ann. of Math. (2) \textbf{67} (1958), 517--573.
\bibitem{MS} J. Massera and J. Sch\"affer, \emph{Linear Differential Equations and Function Spaces}, Pure and Applied Mathematics 21, Academic Press, New York-London, 1966.
\bibitem{Mather} J. Mather, \emph{Characterization of Anosov diffeomorphisms}, Indag. Math. \textbf{30} (1968), 479--483. 
\bibitem{MSS1} M. Megan,  A. L. Sasu and B. Sasu, \emph{On nonuniform exponential dichotomy of evolution operators in Banach spaces}, Integral Equations Operator Theory \textbf{44} (2002),
71--78.
\bibitem{M} J. S. Muldowney, \emph{Dichotomies and asymptotic behaviour for linear differential systems}, Trans. Amer. Math. Soc.  \textbf{283} (1984), 465--484.
\bibitem{NP} R. Naulin and M. Pinto, \emph{Roughness of $(h, k)$-dichotomies}, J. Differential Equations \textbf{118} (1995), 20--35.
\bibitem{NP1} R. Naulin and M. Pinto, \emph{Stability of Discrete Dichotomies for Linear Difference Systems}, J. Difference Equ. Appl. \textbf{3} (1997), 101--123.

\bibitem{Pe} O. Perron, \emph{Die Stabilit\"atsfrage bei
Differentialgleichungen}, Math. Z. \textbf{32} (1930), 703--728.
\bibitem{Pituk} M. Pituk, \emph{A Criterion for the exponential stability of linear difference equations}, Appl. Math. Lett. \textbf{17} (2004), 779--783.
\bibitem{P} C. Preda, \emph{A discrete Perron-Ta Li type theorem for the dichotomy of evolution operators}, J. Math. Anal. Appl. \textbf{332} (2007), 727--734.
\bibitem{PPC} C. Preda, P. Preda and A. Craciunescu, \emph{Criterions for detecting the existence of the exponential dichotomies in the asymptotic behavior of the solutions of variational equations},
J. Funct. Anal. \textbf{258} (2010), 729--757.
\bibitem{MP} P. Preda and M. Megan, \emph{Nonuniform dichotomy of evolutionary processes in Banach spaces}, Bull. Austral. Math. Soc. \textbf{27} (1983), 31--52.
\bibitem{PPP} P. Preda, A. Pogan and C. Preda, \emph{$(L^p,L^q)$-admissibility and exponential dichotomy of evolutionary processes on the half-line}, Integral Equations Operator Theory \textbf{49} (2004), 405--418.
\bibitem{PPP1} P. Preda, A. Pogan and C. Preda, \emph{Sch\"affer spaces and exponential dichotomy for evolutionary processes}, J. Differential Equations \textbf{230} (2006), 378--391. 
\bibitem{SS2} A. L. Sasu and B. Sasu, \emph{Exponential dichotomy on the real line and admissibility of function spaces}, Integral Equations Operator Theory \textbf{54} (2006), 113--130.
\bibitem{ALS1} A. L.  Sasu, \emph{Exponential dichotomy and dichotomy radius for difference equations}, J. Math. Anal. Appl. \textbf{344} (2008), 906--920.
\bibitem{SS} A. L. Sasu and B. Sasu, \emph{Exponential dichotomy and $(\ell^p,\ell^q)$-admissibility on the half-line}, J. Math. Anal. Appl. \textbf{316} (2006), 397--408.
\bibitem{SS3}A. L. Sasu and B. Sasu, \emph{Integral equations in the study of the asymptotic behavior of skew-product flows	}, Asymptotic Anal. \textbf{68} (2010), 135--153.
\bibitem{SS1} A. L. Sasu and B. Sasu, \emph{On the dichotomic behavior of discrete dynamical systems on the half-line}, Discrete Contin. Dynam. Syst.  \textbf{33} (2013), 3057--3084.
\bibitem{SS4} A. L. Sasu and B. Sasu, \emph{Admissibility and exponential trichotomy of dynamical systems described by skew-product flows}, J. Differential Equations \textbf{260} (2016), 
1656--1689.
\bibitem{Sx2} N. Van Minh, F. R\"abiger and R. Schnaubelt, \emph{Exponential stability, exponential expansiveness, and exponential dichotomy of evolution equations on the half-line}, Integral Equations Operator Theory \textbf{32} (1998), 332--353.
\bibitem{ZLZ} L. Zhou, K. Lu and W. Zhang, \emph{Roughness of tempered dichotomies for infinite-dimensional random difference equations}, J. Differential Equations \textbf{254} (2013), 4024--4046.
\bibitem{ZZ} L. Zhou and W. Zhang, \emph{Admissibility and roughness of nonuniform exponential dichotomies for difference equations}, J. Funct. Anal. \textbf{271} (2016), 1087--1129.
\bibitem{ZLZ2} L. Zhou, K. Lu and W. Zhang, \emph{Equivalences between nonuniform exponential dichotomy and admissibility}, J. Differential Equations \textbf{262} (2017), 682--747. 
\end{thebibliography}

\end{document}